\def\arxivversion{1} %
\ifx\arxivversion\undefined
    \documentclass[letterpaper, 10 pt, conference]{ieeeconf}  
\IEEEoverridecommandlockouts
\usepackage[sort,compress]{cite}
\usepackage[pdftex]{graphicx}
\usepackage{array}
\usepackage{url}
\usepackage{arydshln}
\usepackage{float}
\usepackage{scrextend}
\usepackage{blindtext}
\usepackage{amsmath,amsfonts,amssymb}
\usepackage{tikz}
\usetikzlibrary{positioning}
\usepackage{xspace}
\usepackage{mathrsfs,mathtools}
\usepackage{mathalfa}
\usepackage{euscript}
\usepackage{csquotes}
\usepackage{datetime}
\usepackage{arydshln}
\usepackage{multirow}
\usepackage{blindtext}
\usepackage[caption=false]{subfig}
\usepackage[font=footnotesize]{caption}
\usepackage{wrapfig}

\newcommand{\arxivver}[2]{%
  \ifx\arxivversion\undefined%
    #1%
  \else%
    #2%
  \fi%
}

\DeclareFontFamily{U}{txcal}{\skewchar \font =45}
\DeclareFontShape{U}{txcal}{m}{n}{<-> txr-cal}{}
\DeclareMathAlphabet{\mathcalpxtx}{U}{txcal}{m}{n}

\definecolor{mblue}{rgb}{0,0.4470,0.7410}
\definecolor{morange}{rgb}{0.8500,0.3250,0.0980}
\definecolor{myellow}{rgb}{0.9290,0.6940,0.1250}
\definecolor{mpurple}{rgb}{0.4940,0.1840,0.5560}
\definecolor{mgreen}{rgb}{0.4660,0.6740,0.1880}
\definecolor{mcyan}{rgb}{0.3010,0.7450,0.9330}
\definecolor{mred}{rgb}{0.6350,0.0780,0.1840}
\definecolor{mgreenblue}{rgb}{0.0,1.0,0.5}
\definecolor{parulablue}{rgb}{0.2431,0.1490,0.6588}
\definecolor{parulalblue}{RGB}{39,151,235}
\definecolor{parulagreen}{RGB}{129,204,89}
\definecolor{parulayellow}{RGB}{249,251,21}
\definecolor{verylightgray}{RGB}{248,248,250}

\definecolor{cblue}{rgb}{0,0.9,1}
\definecolor{corange}{rgb}{1,0.7,0}

\newcommand{\tss}[1]{\textsuperscript{#1}}

\newcounter{ass}

\newlength{\extendedwidth}
\arxivver{\setlength{\extendedwidth}{\linewidth}}{\setlength{\extendedwidth}{.7\linewidth}}%

\newcommand{\mc}[1]{\mathcal{#1}}

\newcommand{\mr}[1]{\mathrm{#1}}
\newcommand{\mb}[1]{\mathbb{#1}}

\newcommand{\msf}[1]{\mathsf{#1}}
\newcommand{\mt}[1]{\mathtt{#1}}
\newcommand{\mbf}[1]{\mathbf{#1}}

\newcommand{\dnx}{{n_\mr{x}}}
\newcommand{\dny}{{n_\mr{y}}}
\newcommand{\dnu}{{n_\mr{u}}}

\newcommand{\dnr}{{n_\mr{r}}}

\newcommand{\Nr}{{N_\mr{r}}}

\makeatletter
\DeclareRobustCommand\vdots{%
  \mathpalette\@vdots{}%
}
\newcommand*{\@vdots}[2]{%
  \sbox0{$#1\cdotp\cdotp\cdotp\m@th$}%
  \sbox2{$#1.\m@th$}%
  \vbox{%
    \dimen@=\wd0 %
    \advance\dimen@ -3\ht2 %
    \kern.5\dimen@
    \dimen@=\wd2 %
    \advance\dimen@ -\ht2 %
    \dimen2=\wd0 %
    \advance\dimen2 -\dimen@
    \vbox to \dimen2{%
      \offinterlineskip
      \copy2 \vfill\copy2 \vfill\copy2 %
    }%
  }%
}
\DeclareRobustCommand\ddots{%
  \mathinner{%
    \mathpalette\@ddots{}%
    \mkern\thinmuskip
  }%
}
\newcommand*{\@ddots}[2]{%
  \sbox0{$#1\cdotp\cdotp\cdotp\m@th$}%
  \sbox2{$#1.\m@th$}%
  \vbox{%
    \dimen@=\wd0 %
    \advance\dimen@ -3\ht2 %
    \kern.5\dimen@
    \dimen@=\wd2 %
    \advance\dimen@ -\ht2 %
    \dimen2=\wd0 %
    \advance\dimen2 -\dimen@
    \vbox to \dimen2{%
      \offinterlineskip
      \hbox{$#1\mathpunct{.}\m@th$}%
      \vfill
      \hbox{$#1\mathpunct{\kern\wd2}\mathpunct{.}\m@th$}%
      \vfill
      \hbox{$#1\mathpunct{\kern\wd2}\mathpunct{\kern\wd2}\mathpunct{.}\m@th$}%
    }%
  }%
}
\makeatother

\newcommand{\Acl}{A_\textsc{cl}}

\usepackage{amsthm}
\theoremstyle{definition}
\newtheorem{definition}{Definition}

\theoremstyle{plain}

\newtheorem{lemma}{Lemma}
\newtheorem{proposition}{Proposition}

\newtheorem{assumption}{Assumption}
\newtheorem{fact}{Fact}
\newtheorem{remark}{Remark}
\theoremstyle{remark}

\renewcommand{\qedsymbol}{$\blacksquare$}

\let\labelindent\relax
\usepackage[inline]{enumitem}

\newcommand{\comment}[1]{}
\newcommand{\remove}[1]{{\color{gray}#1}}

\newcommand{\citet}[1]{\cite{#1}}

\begin{document}
\title{Co-designing a Layered Control Architecture}%
\author{%
Chris Verhoek, Charis Stamouli, George J. Pappas and Nikolai Matni
\thanks{C.~Verhoek, G.~J.~Pappas, and N.~Matni are with the Dept. of Electrical and Systems Engineering, University of Pennsylvania, United States. C.~Verhoek is also with the Control Systems Group, Eindhoven University of Technology, The Netherlands. C.~Stamouli is with the Institut f{\"u}r Automatik, ETH Z{\"u}rich, Switzerland. Corresponding author: C.~Verhoek (\texttt{c.verhoek@tue.nl}).}%
\thanks{This work has been supported by AFOSR (FA9550261B038) and the European project COVER under the grant No.~101086228.}%
}

\maketitle
\thispagestyle{plain}
\pagestyle{plain}

\begin{abstract}
	Layered control architectures (LCAs) are ubiquitous in complex engineering systems. However, in practice they are largely designed ad hoc and mostly dictated by the experience of the designer. This is because there is currently no systematic and practically applicable theory available to holistically design LCAs. Hence, systematically reasoning about the optimality, stability, and feasibility of the LCA design is not possible. We set a first step toward such a theory by presenting a framework for co-designing a two-layer LCA for a linear time-invariant plant. The resulting co-design problem optimizes over \emph{architectural} design variables (e.g., reduced order model dimension and sampling ratio between the layers) and \emph{tuning}-based design variables (e.g., the numerical values of the feedback gain or the matrices of the reduced order model), all subject to constraints that ensure stability, feasibility, and compute properties of the closed-loop LCA. The proposed formulation allows to make informed tradeoffs along the Pareto front traced over different axes of optimality, e.g., speed, accuracy or computational effort. We demonstrate the effect of making various tradeoffs in a simple numerical example.

\end{abstract}

\section{Introduction}
Intricate designs of interconnected controllers operating at different levels of abstraction and on multiple timescales are at the core of the high-performance operation of complex, autonomous, cyber-physical systems, including robotics, aerospace, high-tech mechatronics, and power grids~\cite{matni2024quantitative}. While such \emph{layered control architectures}~(LCAs) are a near-universal design pattern, in practice they are still designed largely ad hoc: governed by the experience of the designers, and each layer designed and tuned in isolation. This often necessitates multiple (expensive) redesigns of the LCA to ensure system requirements are met, because interactions between the layers are often only verified a posteriori. The main reason for this design process is the absence of a systematic and theoretically founded theory of control architecture design, making it in fact impossible to make a priori formal statements about the optimality, stability, or feasibility of the LCA design. Such a control architecture design theory must enable the \emph{joint} design of layers in an LCA, i.e., a way to reason \emph{about} LCA design and co-design the layers to trade off different performance aspects of the LCA. {In this work, we aim to get a better understanding of the fundamental relations in LCAs by instantiating the co-design philosophy on a simple, linear, two-layer example, providing a tractable starting point. 
This results in a \emph{concrete} co-design problem, allowing us to make systematic tradeoffs in LCA design. %

\emph{Related work:\ }
We build on the ideas proposed in~\citet{matni2016theory, matni2024quantitative}, which consider joint and/or systematic design of LCAs. Complementary to these more abstract and system-theoretic approaches to LCA theory, several works design and certify a \emph{given} (two-layer) architecture in the context of %
multi-rate \emph{model predictive control}~(MPC)~\cite{rosolia2020multirate, csomay2022multirate, rosolia2022unified}, hierarchical MPC~\cite{scattolini2009architectures}, abstraction-based hierarchical control via (approximate) simulation relations~\cite{pappas2000hierarchically,girard2006hierarchical}, simulation functions~\cite{stamouli2025layered, girard2009hierarchical, stamouli2026layered,fainekos2009temporal}, reference governors~\cite{garone2017reference}, contract theory~\cite{incer2024layered, mazo2024contract, takayama2026safety}, and reachability-based methods~\cite{herbert2017fastrack}. 
All these works provide successful approaches to design the individual controllers of an a priori \emph{given} LCA by providing careful analysis of the interactions between the layers, and tune the resulting controllers based on these analyses. However, the important \emph{architectural choices} in the LCA (e.g, multi-rate sampling time ratio, reduced model order, control horizon length), which dictate the feasible design space, are left out of the design process.
The aforementioned approaches manage the models and constraints in the higher layers through satisfaction of exact consistency conditions (e.g., using abstractions and simulation relations), Lyapunov-based contracts, or barrier certificates. In our work, we \emph{explicitly} make such architectural choices part of the design process. 
 This is in line within the more abstract mathematical theory of co-design~\cite{censi2015mathematical, zardini2023thesis, zardini2021codesign}, which uses category theory to arrive at (Pareto) optimal designs of interconnected, heterogeneous systems through monotone maps. {In this work, we take the view point of the \emph{design meta-problem}~(DMP) in~\citet{verhoek2026quantitative}, which quantifies the tradeoffs in single/intra-layer control design. We combine this with the concepts in the MPC and co-design literature, which results in an explicit, quantifiable co-design problem.} This problem allows designers to systematically make tradeoffs in LCA design subject to both the architectural choices and the usual controller design parameters of the individual layers.

\emph{Contributions:\ } 
We take a first step towards a quantitative approach for the co-design of LCAs. By analyzing the interactions between two layers in a linear multi-rate LCA in terms of \emph{architecture}-based and \emph{tuning}-based design variables (denoted~$\phi$ and~$\theta$, resp.), we define a path toward tractable co-design of LCAs. This path provides explicit and computable conditions to determine whether a set of~$\phi$ and~$\theta$ variables results in a stable and feasible LCA design, i.e., it enables a priori certification and comparison between candidate architectures defined through~$\phi$ and~$\theta$. Concretely, we derive the closed-loop LCA error dynamics on both the timescales of the LCA, all in terms of~$\phi$ and~$\theta$. With these derivations, we formulate the constraints in our co-design problem: a $\phi,\theta$-dependent stability condition, ensuring a stable interconnection of the two layers, and an implicit, but computable set-valued map dependent on~$\phi,\theta$, with which we can conclude whether the LCA design is feasible and constraint satisfaction can be guaranteed. We prove this map is anti-tone, so iteration of the constraint sets yields monotonically tightening enclosures of the constraints for the planner. Putting these together allow us to cast the resulting co-design problem as a multi-objective optimization over the architecture-based and tuning-based design variables, providing a way to make explicit tradeoffs in LCA design. While efficiently solving the co-design problem remains still open, we demonstrate in a simple numerical example that \emph{we can search over the architectural choices}, intuitively illustrating the different behaviors of Pareto-optimal LCA designs.

\emph{Notation:\ }
$\mb{R}$, $\mb{N}$, and $\mb{N}_{\geq0}$ are the sets of real, natural and non-negative integer numbers, respectively. For a matrix $X\in\mb{R}^{n\times n}$, we denote the operator norm of~$X$ by~$\|X\|$, while $\rho(X)$ denotes its spectral radius. For sets $\mc{X,Y}\subset\mb{R}^n$, the Minkowski sum and Pontryagin difference of $\mc{X,Y}$ are $X\oplus Y := \{ x + y \mid x\in X, \ y\in Y\}$ and $X\ominus Y = \{ z \mid \{z\}\oplus Y \subseteq X\}$, respectively. 
All proofs and derivations can be found in the \arxivver{extended version of this paper.}{appendix.}

\section{Problem formulation}\label{s:problem}
\subsection{The plant and control architecture}\label{ss:plant+lca}
We design an LCA for the \emph{linear time-invariant}~(LTI), continuous-time plant:
\begin{equation}\label{eq:ctplant}
	\dot{x}(t) = A_\mr{c} x(t) + B_\mr{c} u(t),
\end{equation}
where $x(t)\in\mb{R}^\dnx$ and $u(t)\in\mb{R}^\dnu$. Throughout the design of the LCA, we will work with the discretized\footnote{While the discretization method can be arbitrary, the resulting discrete-time representation must be such that any controller stabilizing~\eqref{eq:plant} must stabilize~\eqref{eq:ctplant}. To avoid deferring from the main focus of the paper, we assume throughout exact discretization. This avoids this need to formulate additional stability conditions related to the interactions between the continuous-time plant and the discrete-time controller. This assumption can be lifted using the methods presented in~\citet{verhoek2026quantitative}, where we focus on this particular problem in a similar context.} representation of~\eqref{eq:ctplant}, sampled with sampling time~$\tau_\mr{L}$, which gives the discrete-time plant
\begin{equation}\label{eq:plant}
	x_{k+1} = A x_k + B u_k.
\end{equation}
The plant is subject to state and actuator constraints~$x_k\in\mc{X}\subset\mb{R}^\dnx$ and~$u_k\in\mc{U}\subset\mb{R}^\dnu$, respectively, where $\mc{X},\mc{U}$ are compact sets containing the origin. This plant is controlled by the two-layered hierarchical LCA, depicted in Fig.~\ref{fig:hierarchical}.
\arxivver{%
\setlength{\columnsep}{7pt}
\setlength{\intextsep}{7pt}
\begin{wrapfigure}{r}{0.51\columnwidth}
    \centering
    {\small
\begin{tikzpicture}[
    block/.style={rectangle, draw, minimum width=1.5cm, minimum height=1cm, align=center},
    arrow/.style={->, >=stealth, thick}
]

    \node[block] (P) {$\mathbf{P}$};
    \node[block, below=0.8cm of P] (C) {$\mathbf{C}$};

    \draw[arrow] ([xshift=0.5cm]P.south west) -- node[left] {$(\bar y,\bar r)$} ([xshift=0.5cm]C.north west);
    
    \draw[arrow] ([xshift=-0.5cm]C.north east) -- node[right] {$x$} ([xshift=-0.5cm]P.south east);

    \node[right=0.3cm of P] {Runs at~$\tfrac{1}{\tau_\mr{H}}$ Hz};
	\node[right=0.3cm of C] {Runs at~$\tfrac{1}{\tau_\mr{L}}$ Hz};

\end{tikzpicture}}
    \vspace{-6pt}
    \caption{Hierarchical control structure.}
    \label{fig:hierarchical}
\end{wrapfigure}
}{%
\begin{figure}[!ht]
    \centering
    {\small
\begin{tikzpicture}[
    block/.style={rectangle, draw, minimum width=1.5cm, minimum height=1cm, align=center},
    arrow/.style={->, >=stealth, thick}
]

    \node[block] (P) {$\mathbf{P}$};
    \node[block, below=0.8cm of P] (C) {$\mathbf{C}$};

    \draw[arrow] ([xshift=0.5cm]P.south west) -- node[left] {$(\bar y,\bar r)$} ([xshift=0.5cm]C.north west);
    
    \draw[arrow] ([xshift=-0.5cm]C.north east) -- node[right] {$x$} ([xshift=-0.5cm]P.south east);

    \node[right=0.3cm of P] {Runs at~$\tfrac{1}{\tau_\mr{H}}$ Hz};
	\node[right=0.3cm of C] {Runs at~$\tfrac{1}{\tau_\mr{L}}$ Hz};

\end{tikzpicture}}
    \caption{Hierarchical control structure.}
    \label{fig:hierarchical}
\end{figure}
}%
The `higher' layer is a planner~$\mbf{P}$, and the `lower' layer is a tracking controller~$\mbf{C}$, which directly controls the plant~\eqref{eq:plant}. The planner runs at~$1/\tau_\mr{H}$~Hz, while the tracking controller runs at~$1/\tau_\mr{L}$~Hz. Let~$\Nr:=\tau_\mr{H}/\tau_\mr{L}\in\mb{N}$ be the ratio between the sampling times. The planning layer~$\mbf{P}$ takes in the current state of~\eqref{eq:plant}, and computes a reference pair~$(\bar y,\bar r)$ for (a part of) the state and input, respectively, which is tracked by~$\mbf{C}$ for~$\Nr$ steps. After~$\tau_\mr{H}$ seconds, an updated reference pair is computed by $\mbf{P}$ and provided to the tracker.

Because of finite compute resources, the planning problem cannot be solved instantaneously. This implies that the reference computed by $\mbf{P}$ in the interval $[n,n+1]$ can only be tracked by $\mbf{C}$ in the interval $[n+1,n+2]$. The timeline sketched in Fig.~\ref{fig:timeline} aims to better illustrate the timescale separation of the layers in the considered LCA. 
\begin{figure}[!ht]
	\centering
	\includegraphics[width=\extendedwidth]{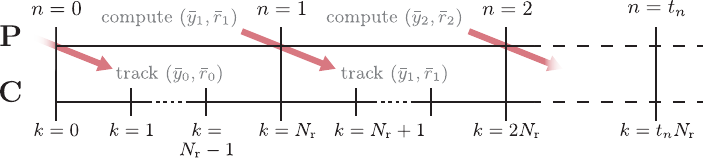}
	\caption{Timeline }\label{fig:timeline}
\end{figure}
The planner runs over the time indices~$n$, while the tracker runs over time indices~$k$. We can express the indices of the tracker,~$k$, in terms of the indices of the planner~$n$:~$k = n \Nr + j$,~$j=0,\dots, \Nr-1$. Throughout the paper, we use the short-hand notation~${z}_{j|n}$ to denote the value of a signal~$z$ on the fast timescale at the~$j$\tss{th} step in between time index~$n$ and~$n+1$. 

$\Nr$ is part of the architectural design variables~$\phi$, and it heavily influences the performance of the overall control scheme. In practice, the ratio~$\Nr$ is often taken between~$5$ and~$10$ as a rule of thumb~\cite{schmidt2020design, ellis2004control}, but mostly depends on the design. However, we do not know, and currently have no way of knowing, when and whether this is \emph{the best choice} for $\Nr$. Among others, this work aims to find the best~$\Nr$ based on the tradeoffs we are willing to make in co-designing the LCA. To make the co-design actionable, we fix the definitions of $\mbf{P}$ and $\mbf{C}$, which are in fact architectural design choices on their own. We will further comment on this decision in \S\ref{s:discussion}.

\subsection{The planning layer}
We take the planning layer as a receding-horizon predictive controller, which uses an approximate model of the plant~\eqref{eq:plant} in closed-loop with $\mbf{C}$ as predictor:
\begin{equation}\label{eq:planmod}
	y_{n+1} = A_\mr{p} y_n + B_\mr{p} r_n,
\end{equation}
with~$y_n\in\mc{Y}\subset\mb{R}^\dny$ and~$r_n\in\mc{R}\subset\mb{R}^\dnr$, where~$\dny\leq\dnx$ and~$\dnr\leq\dnu$ (typically~$\dny\lll\dnx$ and~$\dnr\ll\dnu$). We emphasize that the quality of~\eqref{eq:planmod} directly influences the performance of the overall LCA, and hence the measure of complexity that~\eqref{eq:planmod} can represent, directly governed by~$\dny,\dnr$, is an important architectural design choice. The information sharing between $\mbf{C}$ and $\mbf{P}$ is through the projection maps $\Pi_\mr{x}\in\mb{R}^{\dny\times\dnx}, \Pi_\mr{u}\in\mb{R}^{\dnr\times\dnu}$, such that~$y = \Pi_\mr{x}x$ and ~$r = \Pi_\mr{u}u$. In particular, we assume:
\begin{assumption}\label{ass:proj}
	The projection maps $\Pi_\mr{x}, \Pi_\mr{u}$ satisfy~$\Pi_\mr{x}\Pi_\mr{x}^\top=I_{\dny}$ and~$\Pi_\mr{u}\Pi_\mr{u}^\top=I_{\dnr}$, and~$(A,\Pi_\mr{x})$ is detectable.
\end{assumption}
Note that this common assumption {($\Pi_\mr{x}$ is often a selection matrix)} can be generalized with minor modifications by assuming~$\Pi_\bullet\Pi_\bullet^\top$ to be non-singular. For notational cleanliness, let $\dnr=\dnu$ and thus~$\Pi_\mr{u} = I_\dnu$.

With a slight misuse of notation, we use $\hat{y}_{i|n}$ to denote the~$i$\tss{th}-step-ahead prediction of $y$ at time index $n$ (still operating on the slow timescale, meaning that $\hat{y}_{1|n}\approx \hat{y}_{0|n+1}$). The receding-horizon nature of $\mbf{P}$ implies that for every time-step~$n$, the planner sets~$\hat{y}_{0|n} = \Pi_\mr{x}x_{n\Nr}$, and forward predicts length-$L$ trajectories of~\eqref{eq:planmod} to obtain the optimal reference for the tracker.\footnote{It is also possible to consider the case where the planner has only access to delayed, partial, and/or noisy information from the true system, cf.~\citet{matni2016theory}. We will consider these cases in follow-up works.} The optimization problem solved by the planner at time step~$n$ is of the form
\begin{subequations}\label{eq:planner}
\begin{align}
	{\arxivver{\textstyle}{}\min_{r_{i|n}}} \arxivver{\ }{\quad} & \ell_\mr{goal}\left(\{\hat y_{i|n}\}_{i=0}^{L-1}, \{r_{i|n}\}_{i=0}^{L-1}\right) \\ 
	\mr{s.t.}\quad & \hat y_{i+1|n} = A_\mr{p} \hat y_{i|n} + B_\mr{p} r_{i|n}, \ i \in [ 0, \, L-1], \\
	& \hat y_{i|n} \in \mc{Y}, \ r_{i|n}\in\mc{R}, \quad i \in [ 0, \, L-1], \\ & \hat y_{0|n} = \Pi_\mr{x}x_{n\Nr}, \label{eq:planner:stateupdate}
\end{align}
under cost~$\ell_\mr{goal}$, which we will further specify in \S\ref{s:codesign}.
\end{subequations}
Throughout, isolated properties of $\mbf{P}$ (recursive feasibility, convergence) are assumed to be handled by well-known standard design methods and are not the focus. We are interested in how the interaction with $\mbf{C}$ determines the architectural design choices of $\mbf{P}$ (e.g., length of~$L$, selection of~\eqref{eq:planmod}, or the relation between $\mc{X,U}$ and $\mc{Y,R}$). Finally, to ensure real-time implementation, the solution to the planning problem must be provided \emph{within}~$\tau_\mr{H}$ seconds.

Let the state and input reference tracked in the interval~$k=[n\Nr, (n+1)\Nr-1]$ be denoted by~$\bar{y}_n$ and~$\bar{r}_n$, respectively. Because of the aforementioned finite computational resources, there is a one-sample delay in the reference update that the planner provides to the tracker. The update relation of the reference signals is:
\begin{equation}\label{eq:plannerupdaterelation}
	(\bar{y}_n, \bar{r}_n) := \left(\hat y_{1|n-1},\ r_{0|n-1}\right).
\end{equation}

\subsection{The tracking layer}\label{ss:trackinglayer}
We set $\mbf{C}$ to be a static state-feedback controller, tracking the setpoint $\bar{y}_n$ and applying the feedforward~$\bar r_n$ from~$\mbf P$ for~$k=n\Nr, \dots, (n+1)\Nr-1$, giving the control law
\begin{equation}\label{eq:controllaw}
	u_k = K({\Pi_\mr{x}}x_k-\bar y_n) + \bar r_n.
\end{equation}
By letting~$\Acl:=A+BK{\Pi_\mr{x}}$, the tracker closed-loop is
\begin{equation}
	x_{k+1} = \Acl x_k - BK\bar y_n+B \bar r_n.
\end{equation}
Note that~$(\bar y_n,\bar r_n)$ remain constant for~$\Nr$ time steps and are updated at~$k=n\Nr$. The design of~$K$ is often based on a cost function $\ell_\mr{track}$, which we will further specify in \S\ref{s:codesign}, subject to the constraint that~$\Acl$ has only eigenvalues in the unit disc.\footnote{As will become clear later,~$\Acl$ \emph{may have unstable poles}, given that~$\Nr$ is finite. However, in practice, an unstable tracking layer is undesirable.}
As with the planning layer, to guarantee real-time application of the architecture, the computation of $u_k$ in~\eqref{eq:controllaw} must take less than~$\tau_\mr{L}$ seconds.
\begin{remark}[Shaping limitations of~$\Acl$]
	The conditions for having complete freedom in arbitrary pole placement of~$\Acl$ are~\cite[Thm.~3]{kimura1975pole}: $(A,B)$ must be controllable, $(A,\Pi_\mr{x})$ must be observable, and $\dny \geq \dnx - \dnu + 1$.
\end{remark}

\subsection{Problem statement}
The design of the two-layered LCA involves making several design choices that heavily affect the resulting closed-loop performance.
{While the design of the \emph{individual layers} is generally well understood, there is no systematic way to holistically quantify tradeoffs in LCA design, and jointly optimize over the design variables $\phi$ and $\theta$.} %
Through the presented simplified setting, we aim to take a first step towards closing this gap {by asking: \emph{Given the plant~\eqref{eq:ctplant} and $\mc{X},\mc{U}$, how to formulate a concrete, evaluable problem problem for co-designing $\mbf{P,C}$ subject to the design variables~$\phi$ and~$\theta$?}}

Toward that end, we carefully analyze the interactions between the layers and formulate conditions in~\S\ref{s:interactions} and~\S\ref{s:constraints} under which the LCA design is stable, feasible and satisfies the compute constraints. These conditions are the core ingredients for the {co-design} problem we formulate in \S\ref{s:codesign}. {This multi-objective optimization problem  \emph{i.}~enables the \emph{joint design} of $\mbf{P}$ and~$\mbf{C}$, \emph{ii.}~guarantees stability, feasibility, and compute constraints of the LCA, and \emph{iii.}~allows for making tradeoffs in the LCA design based on the Pareto-optimal solutions to the co-design problem.}

\section{Analyzing the interactions}\label{s:interactions}
 
Enabling systematic and joint design of the considered LCA starts with analyzing the interactions between the layers and the overall performance. We analyze the tracking error in the interval~$k \in [n\Nr, (n+1)\Nr-1]$, which we denote\footnote{Explicit derivation of the projected error term~$\bar e_{i|n} = \Pi_\mr{x}x_{n\Nr + i} - \bar y_n$ (which is the one of interest) is only possible under the harsh assumption that the tracked subspace is $\Pi$-invariant, i.e., $\exists \mc{A} : \Pi_\mr{x}(A+BK{\Pi_\mr{x}}) = \mc{A}\Pi_\mr{x}$ (or $\Pi_\mr{x}^{-1}$ exists, which is generally not true because $\dny\ll\dnx$). Therefore, we analyse the error with a lifted $\bar y_n$ and later project back to~$\bar e_{i|n}$, if needed.} by~$e_{i|n} = x_{n\Nr + i} - \Pi_\mr{x}^\top\bar y_n$ for $i \in[0, \Nr-1]$. Note that $e_{\Nr|n}=e_{0|n+1}$. 
\subsection{Error evolution on the fast timescale}
It is straightforward to characterize the evolution of $e_{i|n}$ for $i \in[0, \Nr-1]$:
\begin{subequations}\label{eq:errorevolution-n-n+1}
\begin{align}
	e_{i+1|n}  & = \Acl x_{n\Nr + i} - BK{\Pi_\mr{x}\Pi_\mr{x}^\top}\bar y_n + B\bar r_n - {\Pi_\mr{x}^\top}\bar y_n \\
	& = \Acl e_{i|n} + (A - I){\Pi_\mr{x}^\top} \bar y_n + B\bar r_n, 
\end{align}
thus, by defining $g_n:=(A - I){\Pi_\mr{x}^\top} \bar y_n + B\bar r_n$,
\begin{equation}\label{eq:errorevolution}
	e_{i|n} = \Acl^i e_{0|n} + \big(\textstyle\sum_{j=0}^{i-1}\Acl^j\big)g_n, \quad i \in [0, \Nr-1]. 
\end{equation}
\end{subequations}
\begin{remark}[Interpretation of $g_n$]\label{rem:interpretationgn}
	The tracking controller~$\mbf{C}$ is designed such that it tracks the reference $(\bar{y}_n,\bar{r}_n)$. By the definition of~$\mbf{C}$, it is inherently assumed that $(\Pi_\mr{x}^\top\bar{y}_n,\bar{r}_n)$ is an equilibrium point of~\eqref{eq:plant}, i.e.,
	 \[ \Pi_\mr{x}^\top\bar{y}_n = A \Pi_\mr{x}^\top\bar{y}_n + B\bar{r}_n \Leftrightarrow 0 = {(A -I)\Pi_\mr{x}^\top\bar{y}_n + B\bar{r}_n=g_n}. \]
	 Hence, $g_n$ represents the error introduced in the closed-loop due to the deviation of $(\Pi_\mr{x}^\top\bar{y}_n,\bar{r}_n)$ \emph{not} being an equilibrium point. Later, we will see that proper co-design of the architecture will allow us to \emph{counteract this accumulating error by design}, as it can use the accumulating error in anticipation for the reference update at $n+1$.
\end{remark}
\subsection{Error evolution on the slow timescale}
Analyzing how the error~$e_{i|n}$ evolves between~$n$ and~${n+1}$ will show the effect of an updated reference~$(\bar y_n, \bar r_n)$ from~$\mbf{P}$. Let for convenience~$i=0$, and define~$S_\Nr := \sum_{i=0}^{\Nr-1}\Acl^i$.\footnote{Note that $\sum_{i=0}^n X^i= (I-X)^{-1}(I - X^{n+1})$ and $\lim_{n\to\infty} \sum_{i=0}^n X^i = (I-X)^{-1}$ for $\rho(X)<1$.} 
The error evolution on the slow timescale is:
\begin{subequations}\label{eq:errorevolve:final}
\begin{equation}\label{eq:errorevolve:simple}
	e_{0|n+1} = \Acl^{\Nr} e_{0|n} + S_\Nr g_n + {\Pi_\mr{x}^\top}(\bar{y}_n - \bar{y}_{n+1}),
\end{equation}
see \arxivver{the extended version of the paper}{Appendix~\ref{app:deriveslowerror}} for its derivation. The form~\eqref{eq:errorevolve:simple} hides a dynamic relationship in~$\bar{y}_{n+1}$ due to its implicit dependence on~$e_{0|n}$. Therefore, by substituting $ x_{n\Nr} = e_{0|n} + \Pi_\mr{x}^\top\bar{y}_n$\arxivver{,}{ in the final step of Appendix~\ref{app:deriveslowerror},} we alternatively write~\eqref{eq:errorevolve:simple} as
\begin{multline}\label{eq:errorevolve_alt}
	e_{0|n+1} = \big[\Acl^{\Nr} - \Pi_\mr{x}^\top A_\mr{p} \Pi_\mr{x}\big]e_{0|n} \\ + \big[S_\Nr(A-I)\Pi_\mr{x}^\top - \Pi_\mr{x}^\top(A_\mr{p}-I)\big]\bar{y}_n \\+ \big[S_\Nr B - \Pi_\mr{x}^\top B_\mr{p}\big] \bar{r}_n + \Pi_\mr{x}^\top B_\mr{p} (\bar{r}_n- \bar{r}_{n+1}).
\end{multline}
\end{subequations}
\begin{remark}[Interpretations of~\eqref{eq:errorevolve:final}]
	We make a few insightful observations from both forms in~\eqref{eq:errorevolve:final}:
	\begin{itemize}[leftmargin=*, nosep]
		\item Closed-loop stability of the LCA is \emph{not} guaranteed by individually stable~$\mbf{C}$ and~$\mbf{P}$: For $\rho(\Acl)<1$, $\rho(\Acl^i)<1$ for every~$i$, but a combination of an unfortunately designed planner model $A_\mr{p}$ and projection map $\Pi_\mr{x}$ can still result in $\rho(\Acl^{\Nr} - \Pi_\mr{x}^\top A_\mr{p} \Pi_\mr{x})\geq 1$, i.e., an \emph{unstable} LCA.
		\item \eqref{eq:errorevolve_alt} dictates the modeling requirements for~\eqref{eq:planmod}: $\Pi_\mr{x}$ and $A_\mr{p}$ must be such that $\Pi_\mr{x}^\top A_\mr{p}\Pi_\mr{x}$ models the $\Nr$-step-ahead closed-loop dynamics~$\Acl$ of $\mbf{C}$. 
		\item In~\eqref{eq:errorevolve:simple},~$g_n$ is multiplied by the finite-horizon step response matrix $S_\Nr$, representing the compounding impact of the mismatch discussed in Remark~\ref{rem:interpretationgn}. The term~${\Pi_\mr{x}^\top}(\bar{y}_n - \bar{y}_{n+1})$ can \emph{counteract} this compounding mismatch if $\mbf{P}$ is designed such that the updated reference is \emph{exactly} canceling~$S_\Nr g_n$. This is achieved by modeling~\eqref{eq:planmod} such that the second and third term in~\eqref{eq:errorevolve_alt} are zero.
		\item The first three terms in~\eqref{eq:errorevolve_alt} show a clear interaction between the designs of $\mbf P$, $\mbf C$, and the plant~\eqref{eq:plant}, and dictate the LCA design choices. The last term can be seen as an external `impulse error' the system gets when the reference is updated. Our framework allows for making tradeoffs between focusing on the model quality and reducing the magnitude of this impulse error.
	\end{itemize}
\end{remark}
To reduce notational clutter, we use a short form of~\eqref{eq:errorevolve_alt} in the remainder of the paper by introducing
\begin{gather*} 
	\!\mc{A}_\mbf{e}:= \Acl^{\Nr} - \Pi_\mr{x}^\top A_\mr{p} \Pi_\mr{x}, \hspace{1.1cm}  \mc{B}^\mr{r}_\mbf{e}:= S_\Nr B- \Pi_\mr{x}^\top B_\mr{p}, \\
	\mc{B}^\mr{y}_\mbf{e}:= S_\Nr (A - I)\Pi_\mr{x}^\top - \Pi_\mr{x}^\top(A_\mr{p}-I), \quad  \mc{B}^\mr{d}_\mbf{e} :=\Pi_\mr{x}^\top B_\mr{p},
\end{gather*}
such that~\eqref{eq:errorevolve_alt} is rewritten as
\begin{equation}\label{eq:errorevolve_alt:short}
	e_{0|n+1} = \mc{A}_\mbf{e} e_{0|n} + \mc{B}^\mr{y}_\mbf{e} \bar{y}_n + \mc{B}^\mr{r}_\mbf{e} \bar{r}_n + \mc{B}^\mr{d}_\mbf{e} ( \bar{r}_n - \bar{r}_{n+1}).
\end{equation}

\begin{remark}[Connection to approximate simulation]
Using $S_{\Nr}(\Acl-I)=\Acl^{\Nr}-I$, one finds $\mc{B}^\mr{y}_\mbf{e}=0$ if and only if $\Pi_\mr{x}^\top A_\mr{p}=\Acl^{\Nr}\Pi_\mr{x}^\top-S_{\Nr}BK$. This is the consistency condition $PA'=AP+BQ$
of~\citet{girard2009hierarchical} for the $\Nr$-step lifted system, with
$P=\Pi_\mathrm{x}^\top$; likewise, Assumption~\ref{ass:proj} corresponds to $C'=CP$. Our framework does not require the exact satisfaction of the consistency condition in~\citet{girard2009hierarchical}: its violation is absorbed in the constraint-tightening map we present in~\S\ref{s:constraints} (through~$\bar{a}(\mathcal{Y})$ in~\eqref{eq:mcMDelta}).
\end{remark}

\section{Constraining the planning layer}\label{s:constraints}
The closed-loop LCA must ensure constraint satisfaction, while the tracker $\mbf C$ does not enforce $x_k\in\mc{X},u_k\in\mc{U}$. Hence, the planner $\mbf P$ must be designed such that for any provided reference (update), the tracking closed-loop is guaranteed to satisfy the constraints. In this section, we use~\eqref{eq:errorevolve_alt:short} {and standard arguments from the MPC literature to construct a set-valued map} that computes $\mc{Y}, \mc{R}$ as a function of the design variables of~$\mbf P$ and~$\mbf{C}$. {The antitone properties of this map, proven in~\S\ref{ss:prop_of_F}, makes the co-design formulation in~\S\ref{s:codesign} tractable and allows us} to evaluate whether a set of design choices realizes a feasible LCA.
\subsection{Construction of $\mc{Y}, \mc{R}$}\label{ss:construction-of-Y-R}
Define the constants
\begingroup\allowdisplaybreaks
\begin{align*}
	 \bar{a}(\mc{Y})  &:= {\textstyle \sup_{\bar{y}_n\in\mc{Y}}\|\mc{B}^\mr{y}_\mbf{e} \bar{y}_n\|, \quad 
	 \bar{b}(\mc{R})  := \sup_{\bar{r}_n\in\mc{R}}\|\mc{B}^\mr{r}_\mbf{e} \bar{r}_n\|},\\
	 \bar{d}(\mc{R})  &:= {\textstyle\sup_{\bar{r}_n,\bar{r}_{n+1}\in\mc{R}}\|\mc{B}^\mr{d}_\mbf{e} (\bar{r}_n -\bar{r}_{n+1})\|},\\
	 \bar{g}(\mc{Y},\mc{R})  &:= {\textstyle\sup_{\bar{y}_n\in\mc{Y}, \bar{r}_n\in\mc{R}}\|(A - I){\Pi_\mr{x}^\top} \bar y_n + B\bar r_n\|},
\end{align*}
\endgroup
which in turn define the sets 
\begin{subequations}\label{eq:mcMDelta}
\begin{align}
	\mc{W}(\mc{Y},\mc{R})& :=\{\,\varepsilon_\mr{a}\in\mb{R}^{\dnx} : \|\varepsilon_\mr{a}\| \leq \bar{a}(\mc{Y}) \, \}\notag \\ 
    & \qquad \oplus \ \{\,\varepsilon_\mr{b}\in\mb{R}^{\dnx} : \|\varepsilon_\mr{b}\| \leq \bar{b}(\mc{R})\,\} \notag\\ 
    & \qquad\quad \oplus \ \{\,\varepsilon_\mr{d}\in\mb{R}^{\dnx} : \|\varepsilon_\mr{d}\| \leq \bar{d}(\mc{R})\,\}, \\
	\mc{G}(\mc{Y},\mc{R})& :=\{\,\varepsilon_\mr{g}\in\mb{R}^{\dnx} : \|\varepsilon_\mr{g}\| \leq \bar{g}(\mc{Y},\mc{R}) \, \}.
\end{align}
\end{subequations}
Then, we can use $\mc{W}$ to construct the \emph{robust positive-invariant}~(RPI) set~\cite{rakovic2005invariant, kolmanovsky1998theory} of the planning dynamics on the slow timescale:
\begin{equation}
	\mc{E}_0(\mc{Y},\mc{R}) := {\textstyle\bigoplus_{t=0}^\infty}\left(\Acl^{\Nr} - \Pi_\mr{x}^\top A_\mr{p} \Pi_\mr{x}\right)^t\mc{W}(\mc{Y},\mc{R}),
\end{equation}
which states that if $e_{0|0}\in\mc{E}_0$, then $e_{0|n}\in\mc{E}_0$ for $n\in\mb{N}_{\geq1}$. To ensure that the error, and thus the state, remains bounded \emph{between} the reference updates, i.e., $e_{j|n}$, for $j\in[1,\Nr-1]$, we use the set $\mc{G}(\mc{Y},\mc{R})$ to construct an \emph{intra-period} tube, which is defined in the fast timescale. The `slices' of this tube are given by
\begin{equation}
	\mc{E}_j(\mc{Y},\mc{R}) := \Acl^j\mc{E}_0(\mc{Y},\mc{R})\oplus S_j\mc{G}(\mc{Y},\mc{R}).
\end{equation}
Hence, the intra-period tube can be constructed as:
\begin{equation}\label{eq:mcEstar}
	\mc{E}^\star(\mc{Y},\mc{R}):= {\textstyle\bigcup_{j=0}^{\Nr-1}}\mc{E}_j(\mc{Y},\mc{R}).
\end{equation}
Let $\mc{P}:=\mr{range}(\Pi_\mr{x}^\top)$ denote the column space of $\Pi_\mr{x}^\top$. Now we can define the set-valued function that returns the erosion of $(\mc{X},\mc{U})$ for a given $(\mc{Y},\mc{R})$:
\begin{equation}\label{eq:fpit-constraints}
	\msf{F}(\mc{Y},\mc{R}):=\big(\Pi_\mr{x}(\mc{X}\ominus\mc{E}^\star(\mc{Y},\mc{R})\cap\mc{P}), \ \mc{U}\ominus K \Pi_\mr{x}\mc{E}^\star(\mc{Y},\mc{R})\big).
\end{equation}
The constraint sets $(\mc{Y},\mc{R})$ for the planner $\mbf{P}$ that will guarantee constraint satisfaction of the LCA are the solution to the fixed point problem 
\begin{equation}\label{eq:fixedpointYR}
	\msf{F}(\mc{Y}_\star,\mc{R}_\star) = (\mc{Y}_\star,\mc{R}_\star).
\end{equation}

\subsection{Properties and computation of~$\msf{F}$}
An evident question in formulating a tractable co-design problem is: \emph{Under which conditions does there exist a solution $(\mc{Y}_\star,\mc{R}_\star)$ to~\eqref{eq:fixedpointYR} and is $(\mc{Y}_\star,\mc{R}_\star)$ even computable?}

\subsubsection{Existence of $(\mc{Y}_\star,\mc{R}_\star)$}\label{ss:prop_of_F} 
The properties of the operations for the construction of~$\msf F$ allow for the formulation of a condition under which $\msf F$ is anti-tone %
w.r.t. its inputs:
\begin{lemma}\label{lem:antitoneF}
	If $\rho(\Acl^{\Nr} - \Pi_\mr{x}^\top A_\mr{p} \Pi_\mr{x})<1$, then $\msf{F}$ is anti-tone in $(\mc{Y},\mc{R})$.
\end{lemma}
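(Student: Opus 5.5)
We argue by chaining monotonicity through the construction of $\msf{F}$. Order pairs componentwise by inclusion, so $(\mc{Y}_1,\mc{R}_1)\preceq(\mc{Y}_2,\mc{R}_2)$ means $\mc{Y}_1\subseteq\mc{Y}_2$ and $\mc{R}_1\subseteq\mc{R}_2$. Anti-tone means that this ordering implies $\msf{F}(\mc{Y}_2,\mc{R}_2)\preceq\msf{F}(\mc{Y}_1,\mc{R}_1)$.

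\emph{Step 1 (monotonicity of the constants and balls).} Each of $\bar a,\bar b,\bar d,\bar g$ is a supremum over a set that grows with $\mc{Y}$ and/or $\mc{R}$. For $\bar d$, the relevant set is the pairs in $\mc{R}\times\mc{R}$. Hence all four constants are nondecreasing. The balls they define are nested, and so are the Minkowski sums of such balls. Therefore $\mc{W}(\mc{Y}_1,\mc{R}_1)\subseteq\mc{W}(\mc{Y}_2,\mc{R}_2)$ and $\mc{G}(\mc{Y}_1,\mc{R}_1)\subseteq\mc{G}(\mc{Y}_2,\mc{R}_2)$.

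\emph{Step 2 (the RPI set and the tube).} Set $M:=\Acl^{\Nr}-\Pi_\mr{x}^\top A_\mr{p}\Pi_\mr{x}$. Linear images preserve inclusion, so $M^t\mc{W}_1\subseteq M^t\mc{W}_2$ for every $t$. Minkowski sums preserve inclusion, so every partial sum $\bigoplus_{t=0}^T M^t\mc{W}_1$ is contained in the corresponding partial sum for $\mc{W}_2$.

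This is the step where the hypothesis $\rho(M)<1$ is needed, and I expect it to be the main technical point. It guarantees that the infinite Minkowski sum is well defined: we interpret it as the Hausdorff limit (or closure of the union) of the partial sums. Pick an induced norm with $\|M^t\|\le c\lambda^t$ for some $\lambda<1$. The partial sums then lie in a ball of radius $c\,\bar w/(1-\lambda)$, where $\bar w$ bounds the radius of $\mc{W}$. Since $0\in\mc{W}$, the partial sums are nested and increasing, so the limit exists and is compact. Inclusion of the partial sums passes to the limit, and hence $\mc{E}_0(\mc{Y}_1,\mc{R}_1)\subseteq\mc{E}_0(\mc{Y}_2,\mc{R}_2)$. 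If the constants are infinite, $\mc{E}_0$ is simply the whole space and the inclusion is trivial; compactness of $\mc{X},\mc{U}$ can be used to avoid this case if needed.

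The same inclusion arguments give $\mc{E}_j(\mc{Y}_1,\mc{R}_1)=\Acl^j\mc{E}_0\oplus S_j\mc{G}\subseteq\mc{E}_j(\mc{Y}_2,\mc{R}_2)$ for each $j$. Taking the finite union yields $\mc{E}^\star(\mc{Y}_1,\mc{R}_1)\subseteq\mc{E}^\star(\mc{Y}_2,\mc{R}_2)$.

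\emph{Step 3 (erosion reverses inclusion).} The Pontryagin difference is antitone in its second argument: if $\mc{E}\subseteq\mc{E}'$ and $\{z\}\oplus\mc{E}'\subseteq\mc{X}$, then $\{z\}\oplus\mc{E}\subseteq\mc{X}$. Hence $\mc{X}\ominus\mc{E}^\star_2\subseteq\mc{X}\ominus\mc{E}^\star_1$. Intersecting both sides with $\mc{P}$ and applying the linear map $\Pi_\mr{x}$ preserves this inclusion, which gives the first component.

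For the second component, $K\Pi_\mr{x}\mc{E}^\star_1\subseteq K\Pi_\mr{x}\mc{E}^\star_2$, and the same erosion argument gives $\mc{U}\ominus K\Pi_\mr{x}\mc{E}^\star_2\subseteq\mc{U}\ominus K\Pi_\mr{x}\mc{E}^\star_1$. Together these show $\msf{F}(\mc{Y}_2,\mc{R}_2)\preceq\msf{F}(\mc{Y}_1,\mc{R}_1)$, which is the claim.
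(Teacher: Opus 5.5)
Your proof is correct and follows the same argument as the paper. Monotone constants give nested $\mc{W}$ and $\mc{G}$; linear images and Minkowski sums carry the inclusion to $\mc{E}_0$, $\mc{E}_j$ and $\mc{E}^\star$; and antitonicity of the Pontryagin difference in its second argument then reverses the inclusion for $\msf{F}$. Two points are in fact more careful than the paper: you treat the infinite Minkowski sum as the limit of nested, bounded partial sums (the paper only appeals to ``repeated application'' of the Minkowski-sum fact), and you explicitly include the constant $\bar d$.
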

From Lemma~\ref{lem:antitoneF} 
we can directly conclude that $\msf{F}\circ\msf{F}$ is \emph{monotone} in $(\mc{Y},\mc{R})$. Hence, by the Knaster-Tarski theorem~\cite{davey2002introduction}, $\msf{F}\circ\msf{F}$ has a fixed point\footnote{Due to space restrictions, we omit the details on formally defining a complete lattice on which $\msf{F}$ acts.} under $\rho(\Acl^{\Nr} - \Pi_\mr{x}^\top A_\mr{p} \Pi_\mr{x})<1$: the first step in making $\msf{F}$ computable. From the properties of~$\msf{F}$ we  conclude:
\begin{proposition}
	If $\rho(\Acl^{\Nr} - \Pi_\mr{x}^\top A_\mr{p} \Pi_\mr{x})<1$, then either
	\begin{enumerate}[label={\roman*.}]
		\item There exists sets~$(\mc{Y}_\star,\mc{R}_\star)$ such that~\eqref{eq:fixedpointYR} holds,
		\item There exists sets~$(\mc{Y}^1_\star,\mc{R}^1_\star)$,~$(\mc{Y}^2_\star,\mc{R}^2_\star)$ such that $\msf{F}(\mc{Y}^1_\star,\mc{R}^1_\star) = (\mc{Y}^2_\star,\mc{R}^2_\star)$ and $\msf{F}(\mc{Y}^2_\star,\mc{R}^2_\star) = (\mc{Y}^1_\star,\mc{R}^1_\star)$.
	\end{enumerate}
\end{proposition}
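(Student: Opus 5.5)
We will work on the complete lattice $\mathbb{L}$ of pairs $(\mc{Y},\mc{R})$ with $\mc{Y}\subseteq\Pi_\mr{x}\mc{X}$ (more precisely, subsets of $\Pi_\mr{x}(\mc{X}\cap\mc{P})$) and $\mc{R}\subseteq\mc{U}$. These are ordered componentwise by inclusion, with meet given by intersection and join by union; closedness issues are absorbed as in the footnote. By construction, $\msf{F}$ maps $\mathbb{L}$ into itself, since Pontryagin differences of $\mc{X}$ and $\mc{U}$ by sets containing the origin are subsets of $\mc{X}$ and $\mc{U}$. One caveat: $0\in\mc{E}^\star$ requires $0\in\mc{W}$, which holds because the radii are nonnegative.

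Under $\rho(\mc{A}_\mbf{e})<1$, Lemma~\ref{lem:antitoneF} gives that $\msf{F}$ is antitone. Hence $\msf{H}:=\msf{F}\circ\msf{F}$ is monotone on $\mathbb{L}$.

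The plan is the standard argument for antitone maps. Let $\bot=(\emptyset,\emptyset)$ and $\top=(\Pi_\mr{x}(\mc{X}\cap\mc{P}),\mc{U})$. By Knaster--Tarski, $\msf{H}$ has a least fixed point $p_1$ and a greatest fixed point $p_2$, with $p_1\le p_2$. Since $\msf{H}(p_1)=p_1$, applying $\msf{F}$ gives $\msf{H}(\msf{F}(p_1))=\msf{F}(p_1)$, so $\msf{F}(p_1)$ is also a fixed point of $\msf{H}$. Set $(\mc{Y}^1_\star,\mc{R}^1_\star):=p_1$ and $(\mc{Y}^2_\star,\mc{R}^2_\star):=\msf{F}(p_1)$. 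Then $\msf{F}(p_1)=(\mc{Y}^2_\star,\mc{R}^2_\star)$ and $\msf{F}(\mc{Y}^2_\star,\mc{R}^2_\star)=\msf{H}(p_1)=(\mc{Y}^1_\star,\mc{R}^1_\star)$, which is alternative ii. If in addition $\msf{F}(p_1)=p_1$, the two pairs coincide and we are in case i. Thus the dichotomy follows directly. The alternatives are not exclusive, but "either/or" only asks for at least one to hold, and a fixed point trivially gives a 2-cycle with $\mc{Y}^1_\star=\mc{Y}^2_\star$.

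The main obstacle is not the order theory but ensuring that $\msf{F}$ is well defined as a self-map of a complete lattice. This requires that $\mc{E}^\star(\mc{Y},\mc{R})$ is a bounded, well-defined set for every pair in $\mathbb{L}$. I will argue this via $\rho(\mc{A}_\mbf{e})<1$. Compactness of $\mc{X},\mc{U}$ makes $\bar a,\bar b,\bar d,\bar g$ finite. There exist $c$ and $\lambda<1$ with $\|\mc{A}_\mbf{e}^t\|\le c\lambda^t$, so the infinite Minkowski sum defining $\mc{E}_0$ converges, in Hausdorff sense, to a bounded set contained in a ball of radius $c(\bar a+\bar b+\bar d)/(1-\lambda)$. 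The finitely many slices $\mc{E}_j$ are then bounded as well. For the empty-set edge case, set the suprema over empty sets to $0$ so that $\msf{F}(\bot)=\top$ is consistent. With well-definedness secured, the Knaster--Tarski step and the 2-cycle construction above complete the proof.
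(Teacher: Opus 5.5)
Your proposal is correct and follows the same route as the paper: antitonicity of $\msf{F}$ (Lemma~\ref{lem:antitoneF}) makes $\msf{F}\circ\msf{F}$ monotone, Knaster--Tarski gives a fixed point $p$ of $\msf{F}\circ\msf{F}$, and $(p,\msf{F}(p))$ is then either a fixed point or a two-cycle of $\msf{F}$. Beyond the paper, you supply the complete-lattice construction that the paper explicitly omits (the power-set lattice bounded by $(\Pi_\mr{x}(\mc{X}\cap\mc{P}),\mc{U})$, with the convention $\sup\emptyset=0$), and you use $\rho(\mc{A}_\mbf{e})<1$ to show that $\mc{E}^\star$ is bounded, so that $\msf{F}$ is a well-defined self-map --- which is the actual role of that hypothesis.
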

Hence, $\msf{F}$ has either a fixed point $(\mc{Y}_\star,\mc{R}_\star)$ or a \emph{two-cycle pair}~$(\mc{Y}^1_\star,\mc{R}^1_\star)$,~$(\mc{Y}^2_\star,\mc{R}^2_\star)$. Note, however, that we do not \emph{need} a fixed point to ensure constraint satisfaction in the LCA. We simply need that the constraint sets of $\mbf{P}$ satisfy $ (\mc{Y},\mc{R}) \subseteq(\mc{Y}_\star,\mc{R}_\star)$, preferably with $(\mc{Y}_\star,\mc{R}_\star)\setminus(\mc{Y},\mc{R})$ as small as possible. Next, we discuss how to achieve this in the computation of $(\mc{Y},\mc{R})$. 
\subsubsection{Computing $(\mc{Y},\mc{R})$}
The respective anti-tone and monotone property of $\msf{F}$ and $\msf{F}\circ\msf{F}$ makes it possible to compute $(\mc{Y},\mc{R})$ using a standard fixed-point iteration scheme. To show convergence of this scheme, we first introduce some short-hand notation.
Let $\mc{S}_k:=(\mc{Y}_k,\mc{R}_k)$, $\overline{\mc{S}}_k:= \mc{S}_{2k}$, $\underline{\mc{S}}_k:= \mc{S}_{2k+1}$ and, with slight abuse of notation, $\mc{S}_{k+1}:=\msf{F}(\mc{S}_{k})$. Additionally, define $\mc{D}_k:=d_H(\underline{\mc{S}}_k, \overline{\mc{S}}_k)$ as the Hausdorff distance\arxivver{}{\footnote{See Appendix~\ref{app:hausdorff} for a formal definition of the Hausdorff distance.}} between the sets $\underline{\mc{S}}_k$ and $\overline{\mc{S}}_k$, i.e., the distance between the sets $\msf{F}(\mc{Y}_{2k},\mc{R}_{2k})$ and $(\mc{Y}_{2k},\mc{R}_{2k})$. Then we have: %
\begin{lemma}\label{lem:disFcontr}
	Let $\rho(\Acl^{\Nr} - \Pi_\mr{x}^\top A_\mr{p} \Pi_\mr{x})<1$ and set $\mc{S}_0=(\Pi_\mr{x}\mc{X}, \, \mc{U})$. Then, for all $k\geq 0$,
	\( \underline{\mc{S}}_k\subseteq\underline{\mc{S}}_{k+1}\subseteq\overline{\mc{S}}_{k+1}\subseteq\overline{\mc{S}}_k\)
	and $\mc{D}_{k}\geq \mc{D}_{k+1}$.
\end{lemma}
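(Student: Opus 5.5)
The argument is the standard alternating-sequence argument for an anti-tone map, using Lemma~\ref{lem:antitoneF} (valid under the spectral radius hypothesis) and the order on pairs $(\mc{Y},\mc{R})$ given by componentwise inclusion.

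\emph{Base chain.} First I check that $\mc{S}_0=(\Pi_\mr{x}\mc{X},\mc{U})$ is an upper element. Since $\mc{E}^\star\ni 0$ (each slice contains the origin, as $\mc{W},\mc{G}$ are balls around $0$), erosion only shrinks sets. Hence $\mc{X}\ominus\mc{E}^\star\subseteq\mc{X}$ and $\mc{U}\ominus K\Pi_\mr{x}\mc{E}^\star\subseteq\mc{U}$, which gives $\mc{S}_1=\msf{F}(\mc{S}_0)\subseteq\mc{S}_0$, i.e.\ $\underline{\mc{S}}_0\subseteq\overline{\mc{S}}_0$. In fact the same reasoning shows $\mc{S}_k\subseteq\mc{S}_0$ for every $k\geq1$, since every image of $\msf{F}$ lies in $(\Pi_\mr{x}\mc{X},\mc{U})$. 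In particular $\mc{S}_2\subseteq\mc{S}_0$, i.e.\ $\overline{\mc{S}}_1\subseteq\overline{\mc{S}}_0$.

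\emph{Propagation.} Anti-tonicity turns inclusions around once: from $\mc{S}_2\subseteq\mc{S}_0$ we get $\mc{S}_3=\msf{F}(\mc{S}_2)\supseteq\msf{F}(\mc{S}_0)=\mc{S}_1$, so $\underline{\mc{S}}_0\subseteq\underline{\mc{S}}_1$. Applying $\msf{F}\circ\msf{F}$, which is monotone, and inducting then gives $\overline{\mc{S}}_{k+1}\subseteq\overline{\mc{S}}_k$ and $\underline{\mc{S}}_k\subseteq\underline{\mc{S}}_{k+1}$ for all $k$. For the middle inclusion $\underline{\mc{S}}_{k+1}\subseteq\overline{\mc{S}}_{k+1}$, I use $\mc{S}_1\subseteq\mc{S}_0$ and apply $\msf{F}\circ\msf{F}$ repeatedly to obtain $\mc{S}_{2k+1}\subseteq\mc{S}_{2k}$. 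Combining this with $\overline{\mc{S}}_{k+1}\subseteq\overline{\mc{S}}_k$ yields $\mc{S}_{2k+3}\subseteq\mc{S}_{2k+2}$, which is exactly $\underline{\mc{S}}_{k+1}\subseteq\overline{\mc{S}}_{k+1}$. This establishes the full chain.

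\emph{Distances.} For nested sets $A\subseteq B$, the Hausdorff distance reduces to $d_H(A,B)=\sup_{b\in B}\mathrm{dist}(b,A)$. This quantity is monotone: it does not increase when $A$ grows or when $B$ shrinks. Since $\underline{\mc{S}}_k\subseteq\underline{\mc{S}}_{k+1}\subseteq\overline{\mc{S}}_{k+1}\subseteq\overline{\mc{S}}_k$, it follows that $\mc{D}_{k+1}\le\mc{D}_k$. For pairs, I take the Hausdorff distance componentwise and combine the components by a max or a sum. Monotonicity survives either choice.

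\emph{Main obstacle.} The delicate point is making Lemma~\ref{lem:antitoneF} applicable along the whole iteration. The iterates must stay in the domain where $\msf{F}$ is well defined, namely compact sets for which $\bar a,\bar b,\bar d,\bar g$ are finite and $\mc{E}_0$ is bounded; the spectral radius condition supplies the latter. Erosion may also produce empty sets, in which case I would adopt the convention that $\emptyset$ is the bottom element. With that convention, anti-tonicity and the $d_H$ monotonicity still hold, with $d_H$ taken as $+\infty$ where needed.
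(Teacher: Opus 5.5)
Your proposal is correct and matches the paper's proof essentially step for step. Both arguments use $0\in\mc{E}^\star$ to get $\mc{S}_1\subseteq\mc{S}_0$ and $\msf{F}(\cdot)\subseteq\mc{S}_0$, flip $\mc{S}_2\subseteq\mc{S}_0$ into $\mc{S}_1\subseteq\mc{S}_3$ by anti-tonicity, propagate the inclusions with the monotone map $\msf{F}\circ\msf{F}$, and conclude $\mc{D}_{k+1}\le\mc{D}_k$ from the nested-set Hausdorff monotonicity in the appendix (your remarks on empty sets and distances on pairs go beyond what the paper spells out, and the ``combining'' step for the middle inclusion is redundant since $\mc{S}_{2k+1}\subseteq\mc{S}_{2k}$ for all $k$ already gives it).
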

Lemma~\ref{lem:disFcontr} allows us to conclude two important facts:
\begin{enumerate}[leftmargin=*, nosep, label={\emph{\roman*.}}]
	\item It is easy to show that $\underline{\mc{S}}_k\subseteq\mc{S}_\star^{\bullet}\subseteq\overline{\mc{S}}_k$ for any $k$. Hence, $[\underline{\mc{S}}_k,\,\overline{\mc{S}}_k]$ is a certified enclosure of every fixed point of $\msf{F}\circ\msf{F}$, tightening monotonically as~$k$ grows, whether or not it ever collapses to a single set~$\mc{S}_\star$.
	\item $\mc{D}_k$ provides a measure for $\mc{S}_\star \setminus (\mc{Y},\mc{R})$: the smaller~$\mc{D}_k$, the closer we are to the fixed point/two-cycle pair of $\msf{F}$.
\end{enumerate}
Hence %
for a given set of design variables ($\Nr, \dny,A_\mr{p},K$, etc.), we can recursively compute~$\msf{F}$ until $\mc{D}_{k}-\mc{D}_{k+1}$ is less than a certain threshold, and set $(\mc{Y},\mc{R})\gets\underline{\mc{S}}_{k+1}$.%
\arxivver{}{\begin{remark}[On computing $\mc{Y},\mc{R}$]
	For general compact input sets $\mc{S}_k$ containing the origin, computing $F(\mc{S}_k)$ is relatively straightforward. There are standard toolboxes available, e.g.~\cite{mpt3}, to compute Minkowski additions, Pontryagin differences and unions of sets. Furthermore, much research is spent %
	on computing the RPI set~$\mc{E}_0(\mc{Y},\mc{R})$, see, e.g.,~\citet{kolmanovsky1998theory, rakovic2005invariant, mayne2014model}. Computation can be sped up by taking certain assumptions on $\mc{X,U,Y,R}$. %
	Assuming $\mc{X,U}$ as scaled unit balls around the origin, and parametrizing $(\mc{Y},\mc{R})$ as $(\gamma_\mr{y}\mb{B}_\mc{Y},\gamma_\mr{r}\mb{B}_\mc{R})$ gives the following program to obtain the constraint sets for $\mbf{P}$:
	\begin{align*}
		{\textstyle\max_{\gamma_\mr{y},\gamma_\mr{r}\geq 0}} \quad &  \mr{volume}(\gamma_\mr{y}\mb{B}_\mc{Y},\gamma_\mr{r}\mb{B}_\mc{R}) \\ \mr{s.t.} \quad& (\gamma_\mr{y}\mb{B}_\mc{Y},\gamma_\mr{r}\mb{B}_\mc{R}) \subseteq\msf{F}(\gamma_\mr{y}\mb{B}_\mc{Y},\gamma_\mr{r}\mb{B}_\mc{R}),
	\end{align*}
	with $\mb{B}_\bullet$ a unit ball of appropriate dimension. This can be solved directly through the iteration in Lemma~\ref{lem:disFcontr}. 
	On the other hand, if $\mc{Y},\mc{R}$ are restricted to hyper-rectangles, the operations to construct $\msf F$ simplify %
	number-based sum and difference operations. We will present more details on computing $\mc{Y},\mc{R}$ in future work.
\end{remark}}
Analysis and derivation of monotonicity/anti-tonicity properties of $\msf{F}$ w.r.t. individual design variables, such as $\Nr, \dny, K,A_\mr{p}$, etc., is still open and deferred to future work.

\section{Co-designing the LCA}\label{s:codesign}
The derivations in \S\ref{s:interactions} provided us with the error sources that dominate incorrectly designed LCAs and provided the map~$\msf{F}$. This map defines how the planner must be designed to guarantee constraint satisfaction of the LCA. We now consider these observations as $\phi,\theta$-dependent design constraints of the proposed LCA co-design problem. %
We first clarify the exact difference between the design variables~$\phi$ and~$\theta$ considered in the co-design problem. 
\subsection{LCA design variables}
Under the assumption that we are given $A,B,\mc{X},\mc{U}$ from~\eqref{eq:plant}, the choices to be made in the design for the considered two-layer control architecture can be divided up into three types of design variables: \emph{architecture-based} design variables~($\phi$), \emph{tuning-based} design variables~($\theta$), and \emph{algorithmic} design variables.
\begin{itemize}[align=left, leftmargin=*]
	\item[{\bf Architecture-based design variables}]
	influence the \emph{region} of stability and feasibility, and are about making tradeoffs in the design. In our setup, $\dny,\dnr,\Nr$, and $\tau_\mr{L}$ are evident architecture-based variables. As discussed in \S\ref{ss:plant+lca}, in this work we fix some architectural design choices, such as the type of controllers for~$\mbf C$ (state-feedback) and~$\mbf P$ (receding horizon predictive controller). The functional form of $\ell_\mr{goal},\ell_\mr{track}$, e.g., quadratic, linear, etc., is also an architectural design choice, while their \emph{parametrization} is part of the tuning-based design variables. 
	\item[{\bf Tuning-based design variables}]
	shape the behavior of the closed-loop LCA and ensure optimal operation of the architecture, given the design space defined by $\phi$. For this example, the numerical values for $\Pi_\mr{x}, K, A_\mr{p}, B_\mr{p}$, etc. are clearly tuning-based variables. %
	\item[{\bf Algorithmic design variables}]
	influence the tractability of the architecture design, and generally dictate the quality of finding~$\theta$. Examples in our setting are: the synthesis algorithm for~$K$, e.g., LQR, pole placement, or LMI-based; the reduction method from which~\eqref{eq:planmod} and~$\Pi_\bullet$ are obtained, e.g., balanced reduction, model matching, etc.; %
	or the discretization method of~\eqref{eq:ctplant}, e.g., exact, forward-Euler, RK4. While interesting and important, we do not consider the algorithmic component part of the co-design procedure in this paper, and defer this to future work.
\end{itemize}
For the setup we consider, we assume the following to concretely define $\phi$ and $\theta$, allowing us to formulate an LCA co-design problem that is \emph{actionable}.
\begin{assumption}\label{ass:costs}
	We are given $A_\mr{c},B_\mr{c},\mc{X},\mc{U}$,~\eqref{eq:ctplant} is discretized using exact discretization, and $\ell_\mr{goal},\ell_\mr{track}$ are defined as
	\begin{align*}
		\ell_\mr{goal}& := \textstyle \sum_{i=0}^{L-1}(\ast)^\top Q^\mbf{P}(\hat y_{i|n}-\msf{y}_\mr{goal}) \\ &\qquad+ (\ast)^\top R^\mbf{P}(r_{i|n}-\msf{r}_\mr{goal}) + (\ast)^\top Q_\mr{f}^\mbf{P}(\hat y_{L|n}-\msf{y}_\mr{goal}) \\
		\ell_\mr{track}&:=\textstyle\sum_{k=0}^\infty (\ast)^\top Q^\mbf{C}(\Pi_\mr{x}x_{k}-\bar{y}_n) + (\ast)^\top R^\mbf{C}(u_{k}-\bar{r}_n)
	\end{align*}
	where $\msf{y}_\mr{goal},\msf{r}_\mr{goal}$ are fixed a priori, and the matrices $Q^\mbf{P},Q_\mr{f}^\mbf{P},Q^\mbf{C},R^\mbf{P},R^\mbf{C}$ are tuning-based variables.
\end{assumption}
Define the architecture-based and tuning-based design variables~$\phi$ and~$\theta$ as the tuples:
\begin{subequations}
\begin{align}
	\phi&:=\{\dny,\dnr,\Nr, \tau_\mr{L}, L\}, \\
	\theta&:=\{K, A_\mr{p}, B_\mr{p}, \Pi_\mr{x}, \Pi_\mr{u},Q^\mbf{P},Q_\mr{f}^\mbf{P},Q^\mbf{C},R^\mbf{P},R^\mbf{C}\}.
\end{align}
\end{subequations}
We want to re-emphasize that Assumption~\ref{ass:costs} is simply a \emph{choice} we make to make the co-design problem concrete. As we will discuss in~\S\ref{s:discussion}, all the design-dependent aspects in Assumption~\ref{ass:costs} may be completely absorbed into $\phi$ and $\theta$.

With $\phi$ and $\theta$ defined, the map $\msf{F}$ in~\eqref{eq:fpit-constraints} can thus be written as a mapping dependent on the design variables $\phi$ and $\theta$, i.e.,~$\msf{F}_{\phi,\theta}(\mc{Y},\mc{R})$.
Moreover, define for some $\phi,\theta$ the functions $\msf{T}_\mbf{C}(\phi,\theta)$ and $\msf{T}_\mbf{P}(\phi,\theta)$ as representations for the maximum computation time required to compute $u_k$ and $(\bar{y}_n,\bar{r}_n)$  (for a pre-determined hardware platform), e.g., the number of flops required to compute a solution to~\eqref{eq:planner} for a given~$\phi,\theta$.

\subsection{Co-design problem formulation}
We now have all the ingredients to formulate the co-design problem enabling the joint design of $\mbf P$ and $\mbf C$, subject to the stability, compute and feasibility constraints. Combining all the above, the co-design problem for the considered two-layer LCA is as follows:
\begin{subequations}\label{eq:codesignproblem}
\begin{align}
	\min_{\phi,\theta} \quad & \mt{cost}(\phi,\theta) \label{eq:codesignproblem:cost}\\[-3pt] 
	\mr{s.t.}\quad & {\textstyle\rho(\Acl^{\Nr} - \Pi_\mr{x}^\top A_\mr{p} \Pi_\mr{x})<1, } \label{eq:codesignproblem:stab} \\
    & \exists(\mc{Y},\mc{R})\neq (\emptyset, \emptyset): (\mc{Y},\mc{R}) \subseteq \msf{F}_{\phi,\theta}(\mc{Y},\mc{R}),\label{eq:codesignproblem:feas}\\
	& \msf{T}_\mbf{C}(\phi,\theta)\leq \tau_\mr{L}, \quad \msf{T}_\mbf{P}(\phi,\theta)\leq\Nr\tau_\mr{L}. \label{eq:codesignproblem:comp}
\end{align}
\end{subequations}
The \emph{stability} constraint~\eqref{eq:codesignproblem:stab} ensures that the interactions between the layers in the closed-loop LCA are stable, and is directly derived from~\eqref{eq:errorevolve_alt:short}. The constraint~\eqref{eq:codesignproblem:feas} ensures that a particular choice of $\phi,\theta$ results in a \emph{feasible} LCA design: $\mbf{P}$ and $\mbf{C}$ must be such that by only actively enforcing (non-empty) constraints in $\mbf{P}$, $x_k\in\mc{X},u_k\in\mc{U}$ will be inherently satisfied by $\mbf{C}$. The constraint~\eqref{eq:codesignproblem:comp} is a \emph{computational} feasibility constraint, guaranteeing real-time implementation of the LCA: $(\bar{y}_n,\bar{r}_n)$ and $u_k$ must be computed within the sampling times $\tau_\mr{H}$ and $\tau_\mr{L}$, respectively. 

We demonstrated in the previous sections that~\eqref{eq:codesignproblem:stab}--\eqref{eq:codesignproblem:comp} are all computable and verifiable. Hence, making the co-design problem actionable and useful hinges on the definition of the cost function $\mt{cost}(\phi,\theta)$.

\subsection{Definition of $\mt{cost}$}\label{ss:cost}
$\mt{cost}(\phi,\theta)$ dictates between which axes of optimality the designer of the LCA can make informed tradeoffs. Based on user-defined specifications,~$\mt{cost}(\phi,\theta)$ maps choices of~$\phi,\theta$ to Pareto curves that represent a collection of optimal LCA designs. %
The different axes of $\mt{cost}$ simply favor/dominate different regions of the objective space, and the preference of the designer selects which region matters. 

The tradeoffs in LCA design are often heterogeneous and application specific, meaning that there generally does not exist a `unified' definition for~$\mt{cost}$. For the sake of illustration, we build up~$\mt{cost}$ along three axes: speed, accuracy, and compute, based on the classic tradeoff in control design. You cannot be fast, highly accurate, and have little computational effort: ``\emph{there is no free lunch}''. We compose $\mt{cost}$ with the scalar cost functions $J_\mr{speed}(\phi,\theta)$, $J_\mr{accuracy}(\phi,\theta)$, and $J_\mr{compute}(\phi,\theta)$. Systematic joint handling of these cost functions or scalarization of them into a manageable $\mt{cost}$ to explore the resulting Pareto tradeoffs is an active field of research, e.g.,~\cite{wilde2024scalarizing, alharbi2026compositional}. As this is not the focus of the paper, we just define it as a multi-valued cost function:
\[ \mt{cost}(\phi,\theta) := \begin{bsmallmatrix}J_\mr{speed}(\phi,\theta) & J_\mr{accuracy}(\phi,\theta) & J_\mr{compute}(\phi,\theta)\end{bsmallmatrix}^\top. \]
We give concrete examples of these cost components in \S\ref{s:example}.

\section{Case study}\label{s:example}
We solve the LCA co-design problem for a chain of coupled stable linear integrators. The plant is thus described by:
\begin{equation}\label{eq:examplesys}
	\dot{x} = \left[\begin{smallmatrix} 
	-a_0 & b_1 & b_2 & \cdots & \cdots & b_{\dnx-1}\\
	1 & -a_1 & 0 & \cdots & \cdots &0\\
	0 & 1 & \ddots & \ddots & \ddots & \vdots \\
	\vdots & \ddots & \ddots & \ddots & \ddots & \vdots \\
	\vdots & \ddots & \ddots & 1  & -a_{\dnx-2} & 0 \\
	0 & \cdots & \cdots & 0  & 1 & -a_{\dnx-1}
	\end{smallmatrix}\right] x + \left[\begin{smallmatrix} 
	1 \vphantom{b_{\dnx-1}}\\
	0\vphantom{b_{\dnx-1}}\\
	\vdots \vphantom{\ddots }\\
	\vdots  \vphantom{\ddots }\\
	0  \vphantom{a_{\dnx-2}}\\
	0 \vphantom{a_{\dnx-1}}
	\end{smallmatrix}\right] u,
\end{equation}
where $a_i,b_i>0$ and we define the constraint sets as $\mc{X}:=[-x_\mr{max},\, x_\mr{max}]^\dnx$ and $\mc{U}:=[-u_\mr{max},\, u_\mr{max}]^\dnu$. For illustrative purposes, we significantly simplify the co-design problem: we use exact discretization to obtain~\eqref{eq:plant} and exclude $\tau_\mr{L}$, $L$, $Q^\bullet$, and $R^\bullet$ from $\phi,\theta$. Moreover, we restrict $\Pi_\mr{x}$ to $\Pi_\mr{x} = [I_\mr{\dny}\, 0]$, and set $A_\mr{p}:= \Pi_\mr{x}A\Pi_\mr{x}^\top$, $B_\mr{p}:= \Pi_\mr{x}B$, $Q^\mbf{P} := \tfrac{1}{x_\mr{max}^2}I_{\dny}$, $Q^\mbf{C} := \tfrac{1}{x_\mr{max}^2}I_{\dnx}$, $Q^\mbf{P}_\mr{f} := 0$, $R^\mbf{P} := \tfrac{0.01}{u_\mr{max}^2}I_{\dnu}$, and $R^\mbf{C} := \tfrac{1}{u_\mr{max}^2}I_{\dnu}$. Finally, we optimize over~$K$ by solving the discrete-time algebraic Riccati equation, resulting in the optimal solution for a particular $\phi$. This reduces the co-design problem~\eqref{eq:codesignproblem} to a minimization problem over the (integer) architectural design variables~$\dny$ and~$\Nr$, which results in a tractable problem that traces out the Pareto-optimal LCA designs within this simplified setting.

To solve the LCA co-design problem, we will only need to define $J_\mr{speed}$, $J_\mr{accuracy}$, and $J_\mr{compute}$. These are design specific, cf.~\S\ref{ss:cost}, and we define them here from intuition.

\emph{Speed:} A measure for the `speed' of the LCA is $v_n:=\|\bar y_{n+1}-\bar y_n\|/\tau_\mr{H}$. Another measure for the speed of the LCA is $\Nr$ itself. Furthermore, if the tracker is aggressive and reaches $\bar{y}_n$ after only a fraction of $\Nr$, the two layers are not optimally working together. Therefore, we also penalize $|\tfrac{\mr{ln}(0.1)}{\mr{ln}(\rho(\Acl))}-\Nr|+\Nr=:\chi$, giving $J_\mr{speed}:= 1/v_n + \chi/x_\mr{max}$, where we normalized~$\chi$ using the range of~$x$.

\emph{Accuracy:} The measure of accuracy is expressed by the error between the (projected) state and the reference. We can evaluate this directly using $\mc{E}^\star$ in~\eqref{eq:mcEstar}, hence in $J_\mr{accuracy}$ we penalize $\sup_{e\in\mc{E}^\star(\mc{Y}^\star,\mc{R}^\star)}\|e\|$.

\emph{Compute:} In this case study, we assume that $\mbf{C}$ and $\mbf{P}$ \emph{share} their computational resource, meaning that $J_\mr{compute}$ penalizes $\msf{T}_\mbf{C}(\phi,\theta) + \tfrac{1}{\Nr}\msf{T}_\mbf{P}(\phi,\theta)$. Note that under this sharing assumption, we must modify~\eqref{eq:codesignproblem:comp} accordingly. As computing~\eqref{eq:controllaw} is straightforward matrix multiplication,~$\msf{T}_\mbf{C}\propto O(\dny\dnx)$, while computing a solution to the quadratic program~\eqref{eq:planner} scales with $O(n_\mr{y}^3 L)$. For illustration purposes, we use the proxy compute functions $\msf{T}_\mbf{C}:= C_1\dnx\dny$ and $\msf{T}_\mbf{P}:= C_2 n_\mr{y}^3 L$, where we take $C_1=C_2=5\mr{e}^{-6}$. %
\subsubsection*{Numerical results}
For our numerical results (see also {\small\tt tinyurl.com/lcacodesign}), we use the values below:
\begin{center}
{\footnotesize
	\begin{tabular}{cccccc}
		$\dnx$ & $\dnu$ & $\tau_\mr{L}$ & $x_\mr{max}$ & $u_\mr{max}$ & $L$ \\\hline
		20 & 1 & 0.02 & 10 & 4 & 10
	\end{tabular}
}
\end{center}
We solve the simplified co-design problem~\eqref{eq:codesignproblem} on the grid $\dny\times\Nr = [1,\,20]\times [2,\,30]$, and compute each axis of optimality for every grid point.
\begin{figure}
	\centering
	\includegraphics[width=\extendedwidth]{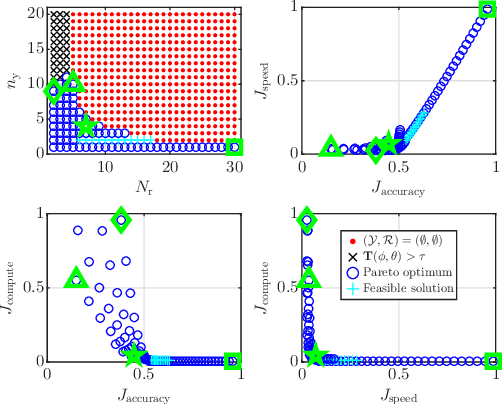}
	\caption{Top-left: Feasibility region of the LCA co-design problem~\eqref{eq:codesignproblem}, where %
	{\color{green}$\triangle$}, {\color{green}$\Diamond$}, {\color{green}$\square$}, {\color{green}$\bigstar$} correspond to a selection of LCA designs that are simulated in Fig.~\ref{fig:responses}. The top-right and bottom figures show the Pareto fronts along the different axes of optimality (normalized to 1) plotted against one another, allowing to make tradeoffs between speed, accuracy and compute.}\label{fig:paretos}
\end{figure}
This yields the plots in Fig.~\ref{fig:paretos}, where we have shown the feasible region on the grid $\dny\times\Nr$ in the top-left plot, and plotted the axes of optimality against each other in the remaining plots. The blue circles indicate Pareto-optimal solutions. Because we do not see $K$ as a tuning-based optimization variable in~\eqref{eq:codesignproblem}, the solutions of the simplified co-design problem on $\dny\times\Nr$ effectively trace out the Pareto curves directly. Note, however, that north of the shown Pareto curves there are (unshown) feasible solutions to~\eqref{eq:codesignproblem} with suboptimal/different numerical values for $K,A_\mr{p}, Q^\mbf{P}$, etc.

The Pareto curves demonstrate the evident tradeoff between speed, accuracy, and compute: If one desires cheap implementation, requiring a small $J_\mr{compute}$, the terms $J_\mr{speed}$ and $J_\mr{accuracy}$ grow. To show the implications of making certain tradeoffs, we selected four implementations on the Pareto curves: three focused on minimizing accuracy (\textcolor{green}{$\triangle$}), speed (\textcolor{green}{$\Diamond$}), or compute (\textcolor{green}{$\square$}), and one as the so-called diversity-enabled $\sigma$-sweet spot~\cite[p.~77]{matni2024quantitative} (\textcolor{green}{$\bigstar$}). The closed-loop responses of these LCA implementations are shown in Fig.~\ref{fig:responses}, where we track the first state of~\eqref{eq:examplesys} to 1. We verified that the simulations of the closed-loop LCA satisfy $x_k\in\mc{X},u_k\in\mc{U}$ (not shown).
\begin{figure}
	\centering
	\includegraphics[width=\extendedwidth]{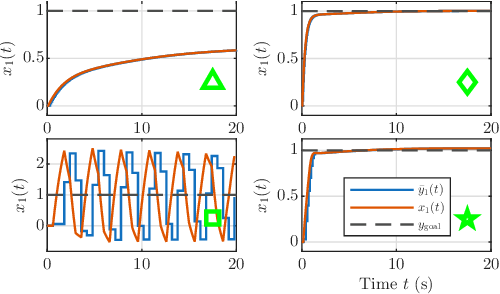}
	\caption{Responses of different Pareto-optimal LCA designs obtained from solving the co-design problem~\eqref{eq:codesignproblem}. The tradeoffs made in the LCA designs either leaned toward maximizing accuracy~(\textcolor{green}{$\triangle$}), speed~(\textcolor{green}{$\Diamond$}), minimizing computational load~(\textcolor{green}{$\square$}), or taking the diversity-enabled $\sigma$-sweet spot (\textcolor{green}{$\bigstar$}).}
	\label{fig:responses}
\end{figure}
The closed-loop responses clearly show the influence of the tradeoffs made for the individual designs of the LCAs. In the upper-left plot, speed is traded in for a high accuracy, i.e., a small error between $\bar{y}_n$ and~$x_{n\Nr+j}$ for all $n\in\mb{N}_{\geq0}$ and $j\in[0,\Nr-1]$, while in the lower-left plot, both speed and accuracy are traded for very cheap compute yielding only a marginally stable LCA. An informed tradeoff between the three considered components in~$\mt{cost}$ results in the lower-right response, which can be seen as the `consensus LCA' that is sufficiently accurate and fast, while meeting the compute limitations.

\section{Discussion and outlook}\label{s:discussion}
\emph{On solving~\eqref{eq:codesignproblem}:\,}
Although solving~\eqref{eq:codesignproblem} is nontrivial (large and heterogeneous solution space due to the definition of~$\phi,\theta$, nonlinear and non-convex constraints, a nonlinear and multi-objective cost), it only needs to be solved \emph{once offline}. If it takes a month to solve, we will know that the design we pick from the resulting Pareto front guarantees stability, constraint satisfaction, and is directly implementable. However, it is important that in further formalizing~\eqref{eq:codesignproblem} (see the following subsections), the co-design methods are polynomial-time solvable, requiring efficient ways to navigate the $\phi,\theta$-space.

\emph{On compositionality:\,}
For our simple two-layer LCA setup, we followed a particular path to arrive at~\eqref{eq:codesignproblem}: derive the interconnection dynamics (\S\ref{s:interactions}), formulate a constraint-tightening map (\S\ref{s:constraints}), and construct the multi-objective, joint optimization problem (\S\ref{s:codesign}). This path itself is not specifically tied to having exactly two layers. In fact, it opens up the possibility to follow it recursively: seeing the LCA in Fig.~\ref{fig:hierarchical} as the lower layer tracker as a whole, and having a higher layer decision maker above it. While conceptually straightforward, the actual form of the design constraints, corresponding to these new interfaces, is open: the resulting tracker is no longer the simple feedback law~\eqref{eq:controllaw}, but the closed-loop expression of the LCA in Fig.~\ref{fig:hierarchical}. Moreover, it is not obvious that an analogous~$\msf F$ exists with similar properties for such a composite system. Formalizing compositionality within our framework likely requires tools beyond the ones we presented. The category theory-based co-design framework of~\citet{zardini2023thesis, censi2015mathematical} seems a natural candidate to achieve this.

\emph{LCA co-design problem as a DMP:\,}
Problem~\eqref{eq:codesignproblem} closely resembles the single-layer DMP of~\citet{verhoek2026quantitative}: both optimize over design choices subject to stability and compute constraints. Making this resemblance precise, e.g., whether~\eqref{eq:codesignproblem} reduces to the single-layer DMP for a differently defined tuple~$\phi$ is worth pursuing. The connection would particularly be of interest when the control problem is highly complex and the individual layers of the baseline solution become expensive to evaluate in their own right (e.g., a learned world model or advanced motion planner). In that situation, the compute constraint may \emph{guide} the distribution of complexity and resources between layers, while guaranteeing stability and feasibility at the same time. Precisely characterizing the aforementioned resemblance in the context of the compositionality direction is, we believe, a promising horizon for the development of a general control architecture theory.

\emph{On noise and disturbances:\,}
The derivations in~\S\ref{s:interactions} and~\S\ref{s:constraints} naturally generalize for plants~\eqref{eq:plant} with a bounded additive disturbance~$v_k\in\mc{V}\subseteq\mb{R}^\dnx$. This disturbance term only expands the signal $g_n$, and requires the (straightforward) construction of an RPI set for the fast timescale as well. We expect that this directly leads to a \emph{robust} version of our LCA co-design problem with only minor modifications.

\section{Conclusions}\label{s:conclusion}
We presented a first step toward systematic and holistic design of \emph{layered control architectures}~(LCAs). We achieved this by building up a co-design problem to jointly optimize over the \emph{architecture-based} design variables, dictating the region of stability and feasibility of the LCA design, and the \emph{tuning-based} design variables, shaping the actual closed-loop behavior of the LCA. By considering a simple linear two-layer setup, we derived conditions, dependent on both design variables types, that guarantee stability, constraint satisfaction, and online implementability in terms of compute constraints of the closed-loop LCA. These conditions allow us to make systematic and informed tradeoffs on the closed-loop performance of the LCA in terms of speed, accuracy, and compute. Hence, we provide an explicit formulation to navigate the complex space of Pareto-optimal LCA designs.

\bibliographystyle{IEEEtran}
\bibliography{refs_acc27}

\arxivver{}{
\appendix
\section{Proofs and derivations}
Some of the results below make use of the tools presented in Appendix~\ref{app:aux}.
\subsection{Derivation of~\eqref{eq:errorevolve:simple}}\label{app:deriveslowerror}
We start with writing out the error at $k=(n+1)\Nr$:
\begin{align*}
	e_{0|n+1} & = x_{(n+1)\Nr} - {\Pi_\mr{x}^\top}\bar{y}_{n+1} =  x_{n\Nr+\Nr} - {\Pi_\mr{x}^\top}\bar{y}_{n+1}\\
	& \overset{\text{\eqref{eq:plannerupdaterelation}}}{=} \Acl x_{n\Nr+\Nr-1} - BK\bar{y}_n + B \bar{r}_n - \Pi_\mr{x}^\top \hat y_{1|n} \\
	& = \Acl x_{n\Nr+\Nr-1} - BK\bar{y}_n + B \bar{r}_n \\ & \hspace{2cm} - \Pi_\mr{x}^\top A_\mr{p} \hat y_{0|n} - \Pi_\mr{x}^\top B_\mr{p} r_{0|n} \\
	& \overset{\text{\eqref{eq:planner:stateupdate}}}{=} \Acl x_{n\Nr+\Nr-1} - BK\bar{y}_n + B \bar{r}_n \\ & \hspace{2cm} - \Pi_\mr{x}^\top A_\mr{p} \Pi_\mr{x}x_{n\Nr} - \Pi_\mr{x}^\top B_\mr{p} r_{0|n}
\end{align*}
Recall that~$S_\Nr = \sum_{i=0}^{\Nr-1}\Acl^i$. Then, substituting~\eqref{eq:errorevolution} for $i=\Nr-1$ and observing that $\Acl{\Pi_\mr{x}^\top}\bar{y}_n - BK\bar{y}_n = A{\Pi_\mr{x}^\top}\bar{y}_n$ yields
\begin{align*}
	e_{0|n+1} & = \Acl^{\Nr} e_{0|n} + \left({\textstyle \sum_{j=1}^{\Nr-1}\Acl^j}\right)g_n + A{\Pi_\mr{x}^\top}\bar{y}_n \\ & \qquad + B \bar{r}_n - \Pi_\mr{x}^\top A_\mr{p} \Pi_\mr{x}x_{n\Nr} - \Pi_\mr{x}^\top B_\mr{p} r_{0|n}\\
	& = \Acl^{\Nr} e_{0|n} + S_\Nr g_n + {\Pi_\mr{x}^\top}\bar{y}_n \\ & \qquad - \Pi_\mr{x}^\top A_\mr{p} \Pi_\mr{x}x_{n\Nr} - \Pi_\mr{x}^\top B_\mr{p} r_{0|n}.
\end{align*}
Finally, by recognizing that the interaction relations~\eqref{eq:planner:stateupdate} and~\eqref{eq:plannerupdaterelation} dictate ${\Pi_\mr{x}^\top}\bar{y}_{n+1} = \Pi_\mr{x}^\top\hat{y}_{1|n} = \Pi_\mr{x}^\top A_\mr{p}\hat{y}_{0|n} + \Pi_\mr{x}^\top B_\mr{p} r_{0|n} = \Pi_\mr{x}^\top A_\mr{p} \Pi_\mr{x}x_{n\Nr}+ \Pi_\mr{x}^\top B_\mr{p} r_{0|n}$, we obtain~\eqref{eq:errorevolve:simple}.

\subsection{Proof of Lemma~\ref{lem:antitoneF}}
Let $\mc{Y}_1\subseteq\mc{Y}_2$ and $\mc{R}_1\subseteq\mc{R}_2$ and fix $\phi,\theta$. First note that, trivially, $\bar{a}(\mc{Y}_1)\leq\bar{a}(\mc{Y}_2)$, $\bar{b}(\mc{R}_1)\leq\bar{b}(\mc{R}_2)$, and $\bar{g}(\mc{Y}_1, \mc{R}_1)\leq\bar{g}(\mc{Y}_2, \mc{R}_2)$. From Fact~\ref{fact:sums}, we immediately get that $\mc{W}(\mc{Y}_1, \mc{R}_1)\subseteq\mc{W}(\mc{Y}_2, \mc{R}_2)$ and $\mc{G}(\mc{Y}_1, \mc{R}_1)\subseteq\mc{G}(\mc{Y}_2, \mc{R}_2)$. Under $\rho(\Acl^{\Nr} - \Pi_\mr{x}^\top A_\mr{p} \Pi_\mr{x})<1$, for any $t\in\mb{N}$, $(\Acl^{\Nr} - \Pi_\mr{x}^\top A_\mr{p} \Pi_\mr{x})^t$ is finite. Consequently, Fact~\ref{fact:mult} yields that $(\Acl^{\Nr} - \Pi_\mr{x}^\top A_\mr{p} \Pi_\mr{x})^t\mc{W}(\mc{Y}_1,\mc{R}_1)\subseteq(\Acl^{\Nr} - \Pi_\mr{x}^\top A_\mr{p} \Pi_\mr{x})^t\mc{W}(\mc{Y}_2,\mc{R}_2)$. Repeated application of Fact~\ref{fact:sums} gives that $\mc{E}_0(\mc{Y}_1,\mc{R}_1)\subseteq\mc{E}_0(\mc{Y}_2,\mc{R}_2)$. Using the same arguments, $\mc{E}_j(\mc{Y}_1,\mc{R}_1)\subseteq\mc{E}_j(\mc{Y}_2,\mc{R}_2)$, and since set inclusion of monotone maps is preserved for the union operator, $\mc{E}^\star(\mc{Y}_1,\mc{R}_1)\subseteq\mc{E}^\star(\mc{Y}_2,\mc{R}_2)$. Finally, we use Fact~\ref{fact:diff} and the definition of $\msf F$ in~\eqref{eq:fpit-constraints} to conclude that for $\mc{Y}_1\subseteq\mc{Y}_2$ and $\mc{R}_1\subseteq\mc{R}_2$, we have that $\msf{F}(\mc{Y}_2,\mc{R}_2)\subseteq\msf{F}(\mc{Y}_1,\mc{R}_1)$, i.e., for fixed design variables $\phi,\theta$, the map $\msf{F}_{\phi,\theta}$ is order-reversing w.r.t. $(\mc{Y},\mc{R})$. That is, $\msf{F}_{\phi,\theta}$ is anti-tone in $(\mc{Y},\mc{R})$, concluding the proof. \qed

\subsection{Proof of Lemma~\ref{lem:disFcontr}}
	It is trivial to see that if we construct $\mc{E}^\star(\mc{S}_0)$ by following the procedure in \S\ref{ss:construction-of-Y-R},~$0\in\mc{E}^\star(\mc{S}_0)$. Therefore, $\mc{S}_1 = \msf{F}_{\phi,\theta}(\mc{S}_0)\subseteq\mc{S}_0$, i.e.,~$\underline{\mc{S}}_0\subseteq\overline{\mc{S}}_0$. Let $\msf{F}^2_{\phi,\theta}:=\msf{F}_{\phi,\theta}\circ\msf{F}_{\phi,\theta}$, and note that $\underline{\mc{S}}_{k+1} = \msf{F}^2_{\phi,\theta}(\underline{\mc{S}}_k)$ and $\overline{\mc{S}}_{k+1} = \msf{F}^2_{\phi,\theta}(\overline{\mc{S}}_k)$. From Lemma~\ref{lem:antitoneF}, we can directly conclude that $\msf{F}^2_{\phi,\theta}$ is monotone, i.e., order-preserving. Repeatedly applying $\msf{F}^2_{\phi,\theta}$ on the inclusion relation~$\underline{\mc{S}}_0\subseteq\overline{\mc{S}}_0$ gives
	\begin{equation}\label{eq:pflm2:1}
		\underline{\mc{S}}_k\subseteq\overline{\mc{S}}_k.
	\end{equation}
	Using the same argument as before, we conclude that for \emph{any} $\mc{S}$, $\msf{F}_{\phi,\theta}(\mc{S})\subseteq\mc{S}_0$, yielding $\overline{\mc{S}}_1=\mc{S}_2\subseteq\mc{S}_0=\overline{\mc{S}}_0$. Applying $\msf{F}_{\phi,\theta}$ once on the latter inclusion reverses the order and thus yields~$\underline{\mc{S}}_0=\mc{S}_1\subseteq\mc{S}_3=\underline{\mc{S}}_1$. We now repeatedly apply $\msf{F}^2_{\phi,\theta}$ on these two inclusion relationships to arrive at
	\begin{equation}\label{eq:pflm2:2}
		\overline{\mc{S}}_{k+1}\subseteq\overline{\mc{S}}_k, \quad \text{and} \quad \underline{\mc{S}}_k\subseteq\underline{\mc{S}}_{k+1}.
	\end{equation}
	Combining~\eqref{eq:pflm2:2} with~\eqref{eq:pflm2:1} gives the inclusion chain $\underline{\mc{S}}_k\subseteq\underline{\mc{S}}_{k+1}\subseteq\overline{\mc{S}}_{k+1}\subseteq\overline{\mc{S}}_k$. Applying Lemma~\ref{lem:hausdorffineq} on this inclusion chain gives the desired result, i.e., $\mc{D}_{k+1} = d_H(\underline{\mc{S}}_{k+1}, \overline{\mc{S}}_{k+1})\leq d_H(\underline{\mc{S}}_{k}, \overline{\mc{S}}_{k}) =  \mc{D}_{k}$.
\qed

\section{Auxiliary results}\label{app:aux}
\subsection{Facts regarding $\oplus,\ominus$}
\begin{fact}\label{fact:sums}
	Let $A,B,C,D\subseteq\mb{R}^n$. If $A\subseteq B$ and $C\subseteq D$, then $A\oplus C \subseteq B\oplus D$.
\end{fact}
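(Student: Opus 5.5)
The plan is to work directly from the definition of the Minkowski sum. Let $z\in A\oplus C$. By definition of $\oplus$ there exist $a\in A$ and $c\in C$ with $z=a+c$.

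The inclusions $A\subseteq B$ and $C\subseteq D$ then give $a\in B$ and $c\in D$. Hence $z=a+c$ is of the form $b+d$ with $b\in B$ and $d\in D$, so $z\in B\oplus D$ by the same definition. Since $z$ was arbitrary, this yields $A\oplus C\subseteq B\oplus D$.

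No compactness, convexity, or containment of the origin is needed, so the argument holds for arbitrary subsets of $\mb{R}^n$. The degenerate case where $A$ or $C$ is empty is covered as well, because then $A\oplus C=\emptyset$.

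There is no real obstacle. The only point to keep straight is that this is a direct element chase with no quantifier reversal, in contrast to the Pontryagin difference (Fact on $\ominus$), which is anti-tone in its second argument. The present fact is the monotonicity used in Lemma~\ref{lem:antitoneF} to pass inclusions through $\mc{W}$, $\mc{E}_0$ and $\mc{E}_j$. For the infinite sum $\bigoplus_{t=0}^\infty$ defining $\mc{E}_0$, I would apply the same element-wise argument to each finite partial sum and then pass to the limit.
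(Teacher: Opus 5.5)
Your element chase is correct and matches the paper, which gives no further detail than saying the fact follows directly from the definition of the Minkowski sum. Your closing remark about the infinite sum in $\mc{E}_0$ is not needed for this two-set statement; whether the ``pass to the limit'' step goes through depends on how $\bigoplus_{t=0}^\infty$ is defined, but that does not affect the proof given here.
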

\begin{fact}\label{fact:mult}
	For $M\in\mb{R}^{m\times n}$ and $A,B\subseteq\mb{R}^n$, let $MA:=\{ y \mid y = Mx, x\in A\}$. If $A\subseteq B$, then $MA\subseteq MB$.
\end{fact}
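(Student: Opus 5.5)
The plan is a direct element-chasing argument from the definition of the linear image $MA=\{y\mid y=Mx,\ x\in A\}$. No earlier result is needed, and neither linearity nor any norm bound on $M$ plays a role; the statement only uses that $M$ induces a well-defined map $\mb{R}^n\to\mb{R}^m$.

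First, I will fix an arbitrary $y\in MA$. By definition of $MA$, there exists some $x\in A$ with $y=Mx$. Second, I will use the hypothesis $A\subseteq B$ to conclude $x\in B$. Third, since $y=Mx$ with $x\in B$, the definition of $MB$ gives $y\in MB$. As $y$ was arbitrary, $MA\subseteq MB$ follows.

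There is no real obstacle. The only point worth stating explicitly is that the argument uses neither the compactness of the sets nor the invertibility or finiteness of any power of $M$. This is relevant to how Fact~\ref{fact:mult} is applied in the proof of Lemma~\ref{lem:antitoneF} with $M=(\Acl^{\Nr}-\Pi_\mr{x}^\top A_\mr{p}\Pi_\mr{x})^t$, $M=\Acl^j$, $M=S_j$, and $M=K\Pi_\mr{x}$. The same reasoning also covers the degenerate case $A=\emptyset$, where $MA=\emptyset$ and the inclusion holds trivially.
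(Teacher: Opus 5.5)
Your proposal is correct and matches the paper's approach: the paper only says that the fact follows directly from the definitions, and your element-chasing argument ($y\in MA \Rightarrow y=Mx$ for some $x\in A\subseteq B \Rightarrow y\in MB$) is exactly that unpacking. Your side remark is also accurate: the argument does not use any finiteness or spectral-radius condition on the matrix power, so that condition is not needed for this step in the proof of Lemma~\ref{lem:antitoneF}.
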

\begin{fact}\label{fact:diff}
	Let $A,B,C\subseteq\mb{R}^n$. If $A\subseteq B$, then $C\ominus B \subseteq C\ominus A$.
\end{fact}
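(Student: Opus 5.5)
The plan is to unfold the definition of the Pontryagin difference given in the Notation paragraph and argue pointwise: the statement is that the map $S\mapsto C\ominus S$ is order-reversing, and this should follow in a single line once Fact~\ref{fact:sums} is available. Concretely, I fix an arbitrary $z\in C\ominus B$ and aim to show $z\in C\ominus A$, i.e., that $\{z\}\oplus A\subseteq C$.

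The first step is to note that $z\in C\ominus B$ means, by definition, $\{z\}\oplus B\subseteq C$. The second step is to compare $\{z\}\oplus A$ with $\{z\}\oplus B$. Applying Fact~\ref{fact:sums} with the trivial inclusion $\{z\}\subseteq\{z\}$ and the hypothesis $A\subseteq B$ gives $\{z\}\oplus A\subseteq\{z\}\oplus B$. Alternatively, one can check this directly: any $z+a$ with $a\in A$ is also $z+a$ with $a\in B$.

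Chaining the two inclusions gives $\{z\}\oplus A\subseteq\{z\}\oplus B\subseteq C$, hence $z\in C\ominus A$. Since $z$ was arbitrary, this yields $C\ominus B\subseteq C\ominus A$.

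There is no real obstacle; the only point of care is the degenerate cases. If $C\ominus B=\emptyset$, the inclusion is vacuous. If $A=\emptyset$, then $\{z\}\oplus A=\emptyset\subseteq C$ for every $z$, so $C\ominus A=\mb{R}^n$ and the claim again holds. Neither case needs separate treatment, because the pointwise argument already covers both.
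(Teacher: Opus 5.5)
Your proof is correct and is essentially the paper's approach: the paper only remarks that the fact follows directly from the definitions of $\oplus$ and $\ominus$. Your pointwise chain $\{z\}\oplus A\subseteq\{z\}\oplus B\subseteq C$ is exactly that unfolding, and your treatment of the degenerate cases is right.
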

The proofs for these facts directly follow from the definition of the Minkowski sum and Pontryagin difference.
\subsection{Hausdorff distances and set inclusions}\label{app:hausdorff}
Let $(M,h)$ be a metric space. Let $d$ be a distance function, quantifying the distance from a point $a\in X\subset M$ to the subset $B\subseteq X\subset M$, which is defined as \(d(a,B):=\inf_{b\in B} h(a,b)\). So,~the function~$d$ represents the distance between $a$ and the point in $B$ that is closest to $a$. For $A,B\subset M$, $\sup_{a\in A} d(a,B)$ thus describes the largest closest distance from any point in $A$ to $B$. The Hausdorff distance symmetrizes this, i.e., it defines a measure for how far apart two subsets of a metric space are from each other.
\begin{definition}[Hausdorff distance]
	Let $(M,h)$ be a metric space and consider non-empty subspaces $A,B\subset M$. The Hausdorff distance $d_H$ between $A$ and $B$ is defined as
	\[ d_H(A,B):= \max\left\{{\textstyle \sup_{a\in A} d(a,B), \ \sup_{b\in B} d(A,b) }\right\}. \]
\end{definition}
Thus, the Hausdorff distance is the largest possible distance from any point in a set to the closest possible point in another set.
This allows to formulate the following lemma for $A_1,A_2,B_1,B_2\subset M$:
\begin{lemma}\label{lem:hausdorffineq}
	If $B_1\subseteq B_2 \subseteq A_2 \subseteq A_1$, then $d_H(A_2,B_2)\leq d_H(A_1,B_1)$.
\end{lemma}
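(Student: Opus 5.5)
The plan is to prove the contrapositive-free direct inequality by bounding each of the two one-sided terms in $d_H(A_2,B_2)$ by the corresponding term in $d_H(A_1,B_1)$. This uses only the two monotonicity properties of the point-to-set distance $d(a,B)=\inf_{b\in B}h(a,b)$, which I take as the basic tools.
\begin{enumerate*}[label=(\roman*)]
\item $d(a,B)$ is non-increasing in $B$ under inclusion. If $B\subseteq B'$, the infimum over $B'$ runs over a larger set, so $d(a,B')\leq d(a,B)$.
\item $\sup_{a\in A} f(a)$ is non-decreasing in $A$ under inclusion, for any fixed function $f$.
\end{enumerate*}

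First consider $\sup_{a\in A_2} d(a,B_2)$. Since $B_1\subseteq B_2$, property (i) gives $d(a,B_2)\leq d(a,B_1)$ for every $a$. Since $A_2\subseteq A_1$, property (ii) then gives
\[
\sup_{a\in A_2} d(a,B_2)\leq \sup_{a\in A_2} d(a,B_1)\leq \sup_{a\in A_1} d(a,B_1)\leq d_H(A_1,B_1).
\]

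Next consider $\sup_{b\in B_2} d(A_2,b)$. Here the chain $B_2\subseteq A_2$ is what matters: every $b\in B_2$ lies in $A_2$, so $d(A_2,b)=0$ (taking $a=b$, since $h(b,b)=0$). Hence this term equals $0\leq d_H(A_1,B_1)$. Taking the maximum of the two bounds gives $d_H(A_2,B_2)\leq d_H(A_1,B_1)$.

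There is no real obstacle. The only points needing care are the bookkeeping of which inclusion drives which monotonicity, and non-emptiness. Non-emptiness is needed so that all infima and suprema are well defined, as required in the Definition. It is assured by $B_1\neq\emptyset$ together with the chain $B_1\subseteq B_2\subseteq A_2\subseteq A_1$. I would also note that in the intended application $\underline{\mc{S}}_k\subseteq\underline{\mc{S}}_{k+1}\subseteq\overline{\mc{S}}_{k+1}\subseteq\overline{\mc{S}}_k$ the sets are pairs. For those, the Hausdorff distance is taken componentwise (for example, the maximum over the $\mc{Y}$ and $\mc{R}$ components, or in the product metric), and the argument above applies verbatim in each component.
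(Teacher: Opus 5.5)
Your proof is correct and follows essentially the same route as the paper: use $B_2\subseteq A_2$ to make the term $\sup_{b\in B_2}d(A_2,b)$ vanish, then bound $\sup_{a\in A_2}d(a,B_2)$ by $\sup_{a\in A_1}d(a,B_1)$ using the two monotonicity properties. The differences are cosmetic: you apply the two monotonicity steps in the opposite order to the paper, and you bound by the full maximum in $d_H(A_1,B_1)$ instead of first reducing it to its one-sided form via $B_1\subseteq A_1$.
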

\begin{proof}
	Since $B_i\subseteq A_i$, $\sup_{b\in B_i}d(A_i,b) = 0$ and thus $d_H(A_i,B_i) = \sup_{a\in A_i}d(a,B_i)$. Moreover, as $A_2\subseteq A_1$, we have that $d_H(A_2,B_2) = \sup_{a\in A_2}d(a,B_2) \leq \sup_{a\in A_1}d(a,B_2)$, because we consider the supremum over a larger domain. On the other hand, since $B_1\subseteq B_2$, $\sup_{a\in A_1}d(a,B_2)\leq \sup_{a\in A_1}d(a,B_1) = d_H(A_1,B_1)$, as the set we compare against is smaller. Combining these, we get $d_H(A_2,B_2)\leq d_H(A_1,B_1)$, proving the claim. 
\end{proof}

}

\end{document}